\algnewcommand{\algorithmicor}{\textbf{ or }}
\algnewcommand{\OR}{\algorithmicor}
\newcommand{\R}{\mathbb{R}}
\newcommand{\F}{\textup{F}}
\newcommand{\C}{\mathbb{C}}
\newcommand{\mx}{\mathbf{x}}
\newcommand{\my}{\mathbf{y}}
\renewcommand{\d}{\,\textup{d}}
\DeclareMathOperator{\sign}{sign}
\newcommand{\norm}[1]{\left\lVert#1\right\rVert}
\crefname{hypothesis}{Hypothesis}{Hypotheses}
\title{Multivariate rational approximation of functions with curves of singularities\thanks{Submitted to the editors \today.
\funding{N.B. was supported by an INI-Simons Postdoctoral Research Fellowship and the SciAI Center, funded by the Office of Naval Research (ONR) under Grant N00014-23-1-2729. A.H. is a PhD fellow of the Research Foundation Flanders (FWO), funded by grant 11P2T24N. D.H. was supported in part by FWO research project G088622N.}}}
\author{Nicolas Boull\'e\thanks{Department of Applied Mathematics and Theoretical Physics, University of Cambridge, Cambridge, CB3 0WA, UK
  (\email{nb690@cam.ac.uk}).}
\and Astrid Herremans\thanks{Department of Computer Science, KU Leuven, 3001 Leuven, Belgium
  (\email{astrid.herremans@kuleuven.be}, \email{daan.huybrechs@kuleuven.be}).}
\and Daan Huybrechs\footnotemark[3]}
\begin{document}

\maketitle

\begin{abstract}
  Functions with singularities are notoriously difficult to approximate with conventional approximation schemes. In computational applications, they are often resolved with low-order piecewise polynomials, multilevel schemes, or other types of grading strategies. Rational functions are an exception to this rule: for univariate functions with point singularities, such as branch points, rational approximations exist with root-exponential convergence in the rational degree. This is typically enabled by the clustering of poles near the singularity. Both the theory and computational practice of rational functions for function approximation have focused on the univariate case, with extensions to two dimensions via identification with the complex plane. Multivariate rational functions, i.e., quotients of polynomials of several variables, are relatively unexplored in comparison. Yet, apart from a steep increase in theoretical complexity, they also offer a wealth of opportunities. A first observation is that singularities of multivariate rational functions may be continuous curves of poles, rather than isolated ones. By generalizing the clustering of poles from points to curves, we explore constructions of multivariate rational approximations to functions with curves of singularities.
\end{abstract}

\begin{keywords}
  rational approximation, multivariate functions, singularity, least-squares
\end{keywords}

\begin{MSCcodes}
  41A20, 65E05, 32S70
\end{MSCcodes}

\section{Introduction}

Rational approximations of functions provide a powerful representation of functions with singularities in comparison with polynomial approximation. Theoretical results pointing in this direction date back to Zolotarev~\cite{zolotarev1877application} and Newman~\cite{newman1964}. More recently, several numerical algorithms have been developed which take advantage of these long known approximation results; these include the adaptive Antoulas--Anderson (AAA) algorithm~\cite{driscoll2024aaa,huybrechs2023aaa,nakatsukasa2018aaa,nakatsukasa2020algorithm,xue2023computation}, and the ``lightning'' method, which approximates solutions to differential equations on two-dimensional domains with corners using univariate rational functions~\cite{baddoo2020lightning,brubeck2022lightning,gopal2019new,gopal2019solving}.

In one dimension one can employ a partial fraction representation with a polynomial part to represent a function $f$ defined on an interval $I\subset\mathbb{R}$ as~\cite{herremans2023resolution}:
\begin{equation} \label{eq_1d_rat}
  f(z)\approx r(z) = \sum_{j=1}^{N_q}\frac{a_j}{z-q_j}+\sum_{j=0}^{N_p}b_j P_j(z).
\end{equation}
This representation lies at the heart of the lightning method. Contrary to the AAA algorithm, which aims to solve the non-linear problem of best approximation by a rational function, the finite poles $\{q_j\}_{j=1}^{N_q}$ in \cref{eq_1d_rat} are fixed and selected a priori to cluster exponentially towards the singularities of the function $f$, of which the locations are assumed to be known. This results in a linear approximation problem, which achieves root-exponential convergence to functions with isolated singularities~\cite{gawlik2019zolotarev,gawlik2020zolotarev,gawlik2021approximating,huybrechs2023sigmoid}, where the rate of convergence depends on the pole locations and the type of singularity~\cite{gopal2019solving,trefethen2021clustering,herremans2023resolution}. Here, singularities are the points in the domain where the function either is discontinuous, e.g.\ $f(z)=\sign(z)$, has a discontinuity in one of its derivatives, e.g.\ $f(z)=|z|$, or is non-differentiable, e.g.\ $f(z)=\sqrt{z}$. The addition of a smooth residual part consisting of polynomials $P_j(z)$ up to degree $N_p$ in the rational approximation, as in~\cref{eq_1d_rat}, has been shown to improve the overall accuracy and numerical stability of the scheme~\cite{herremans2023resolution}. Hence, \cite{herremans2023resolution} observed the convergence of \cref{eq_1d_rat} to functions $x^\alpha$ on $[0,1]$ at optimal rates~\cite{stahl1993best} by using least-square fitting. However, these approaches are usually limited to univariate rational expressions and the approximation of functions with isolated singularities.

In this work, we are interested in computing multivariate rational approximations to functions with known curves of singularities. Inspired by the lightning approximation, we employ two-dimensional analogs of \cref{eq_1d_rat} with clustering curves of poles. There are several applications motivating this study. First, the representation of curves of singularities is a major topic in image processing, leading to research on optimal sparse representations of piecewise smooth functions delineated by $C^2$-curves achieved, e.g., by curvelets~\cite{candes2002curvelets}. Second, the field of scientific machine learning aims to solve partial differential equations (PDEs) or learn solution operators associated with PDEs from data using a neural network representation~\cite{karniadakis2021physics}. These applications often involve the approximation of functions with shocks or discontinuities~\cite{mao2020physics}. Recently, a neural network architecture based on rational activation functions, called rational neural networks~\cite{boulle2020rational}, has been proposed to approximate Green's functions associated with linear PDEs~\cite{boulle2022data}. When approximating Green's functions, rational neural networks, which are multivariate rational approximants, have poles that tend to cluster towards the diagonal of the domain, where Green's functions are singular~\cite{evans2010partial}. The development of multivariate rational approximation algorithms to functions with curves of singularity should deepen our understanding of this phenomenon.

To the best of our knowledge, there have been limited explorations of computational techniques for multivariate rational approximation beyond~\cite{austin2020practical,berrut2021linear,cuyt2010practical,cuyt1983multivariate,cuyt1985multivariate,yang2023fast}. Most of the previous studies in this topic only consider functions with isolated singularities. Theoretical convergence rates for multivariate rational approximations are derived in \cite{devore1986multivariate} by piecing together local polynomial approximations with rational partition-of-unity functions, extending the univariate ideas introduced by DeVore~\cite{devore11983maximal}. The series of works by Cuyt and coauthors~\cite{cuyt2010practical,cuyt1983multivariate,cuyt1985multivariate} construct multivariate Pad\'e approximations of the form $r(x,y) = P(x,y)/Q(x,y)$ and shows practical error bounds. More recently,~\cite{austin2020practical} computes multivariate Pad\'e approximations iteratively to discard potential spurious poles of the denominator $Q(x,y)$. This approach suffers when the function has singularities, and determining polynomial degrees of numerator and denominator is based on a heuristic method, which impacts the convergence rates.

\subsection{Organization of the paper}

The main result of this paper is a linear scheme to construct multivariate rational approximations to functions that are singular along the zero levels of a multivariate polynomial. The scheme is based on well-chosen polynomial denominators associated with level curves of that same polynomial with complex shifts, in combination with exponential clustering of those shifts towards zero. Significant attention goes towards a numerically stable implementation of that scheme. We describe a series of examples with increasing levels of complexity.

The paper is organized as follows. First, we introduce a multivariate rational approximation scheme for functions with singularities located along straight lines using a tensor-product representation in \cref{sec_tensor}. We prove the robustness of the scheme with respect to the nature of the singularity. We then generalize this construction to functions with singularities along the diagonal of a domain using a piecewise rational approximation method in \cref{sec_piecewise}. Finally, we consider the approximation of functions with algebraic curves of singularities in \cref{sec_curved} and conclude in \cref{sec:conclusions}.

\section{Tensor-product representation} \label{sec_tensor}

We consider a tensor-product representation of a multivariate rational function to approximate two-dimensional functions with singularities located along straight horizontal or vertical lines. This construction is a generalization of \cref{eq_1d_rat} with exponentially clustering lines of poles.

\subsection{Construction of the approximation} \label{sec_construction_tensor}

First,  we consider the rational approximation of a function $f\colon(0,1]\times [0,1]\to \mathbb{R}$ with a singularity located at $x=0$. We employ a basis of functions consisting of $N_q$ finite poles, with associated partial fractions $\{\frac{q_j}{x-q_j}\}_{j=1}^{N_q}$, and a polynomial basis of degree $N_p$: $\{P_0,\ldots,P_{N_p}\}$. Following~\cite{herremans2023resolution,trefethen2021clustering}, we choose tapered exponentially clustered poles,
\begin{equation} \label{eq_poles}
  q_j = \pm i \exp(-\sigma(\sqrt{N_q}-\sqrt{j})), \quad 1\leq j\leq N_q,
\end{equation}
yet we cluster them along the imaginary axes (note the factor $\pm i$). This allows us to cluster poles exponentially close to singularities of the function inside the domain without introducing additional poles. The parameter $\sigma>0$ in \cref{eq_poles} controls the spacing between poles and is chosen to be $\sigma=2\pi$ in this paper. This parameter choice is discussed later in~\cref{sec:parameterchoices}.

One can then approximate $f$ by the tensor-product of a rational function in $x$ with a polynomial in $y$ as follows:
\begin{equation} \label{eq_tensor_rational}
  f(x,y)\approx r(x,y) = \sum_{j=1}^{N_q}\frac{q_j}{x-q_j}\sum_{k=0}^{N_p}a_{j,k}P_k(y)+\sum_{0\leq j,k\leq N_p}b_{j,k} P_j(x)P_k(y).
\end{equation}
We adopt the ``lightning + polynomial approximation'' scheme of~\cite{herremans2023resolution}, which consists of choosing the polynomial degree as $N_p = \mathcal{O}(\sqrt{N_q})$. While the convergence rate of a multivariate polynomial approximation is dictated by the Euclidean degree of the multivariate polynomial rather than the maximal degree~\cite{trefethen2017multivariate}, this representation allows for an efficient least-square solver exploiting the tensor-product structure of~\cref{eq_tensor_rational}. To that end, the rational function $r$ can be written as
\[r(x,y) = \begin{bmatrix}
    \frac{q_1}{x-q_1} & \cdots & \frac{q_{N_q}}{x-q_{N_q}} & P_0(x) & \cdots & P_{N_p}(x)
  \end{bmatrix}
  C
  \begin{bmatrix}
    P_0(y) \\
    \vdots \\
    P_{N_p}(y)
  \end{bmatrix},\]
where the matrix $C\in \C^{(N_q+N_p+1)\times (N_p+1)}$ contains the coefficients $\{a_{j,k}\}$, $\{b_{j,k}\}$ as follows,
\[C =
  \begin{bmatrix}
    a_{1,0}   & \cdots & a_{1,N_p}   \\
    \vdots    & \ddots & \vdots      \\
    a_{N_q,0} & \cdots & a_{N_q,N_p} \\
    b_{0,0}   & \cdots & b_{0,N_p}   \\
    \vdots    & \ddots & \vdots      \\
    b_{N_p,0} & \cdots & b_{N_p,N_p}
  \end{bmatrix}.
\]
We then solve the following least-square system to find the coefficient matrix $C$:
\begin{equation} \label{eq_least_square}
  \min_{C\in\C^{(N_q+N_p+1)\times (N_p+1)}}\|ACB^\top-F\|_{\textup{F}},
\end{equation}
where $F\in \C^{M_x\times M_y}, \; A \in \C^{M_x\times (N_q + N_p + 1)}$, and $B \in \C^{M_y\times (N_p + 1)}$ are matrices containing function values of $f$ at a set of sampling points $\{(x_i,y_j)\}_{i,j}$, function values of the basis functions in the $x$ variable at $\{x_i\}_i$, and function values of the basis functions in the $y$ variable at $\{y_j\}_j$, respectively.

The $M_x\times M_y$ sampling points are chosen to be a superposition of a product grid of $M_p\times M_p$ Chebyshev points in the $x$ and $y$ coordinates, and a product of $M_q$ points clustering exponentially fast to $x=0$ with $M_p$ Chebyshev points in the $y$ coordinates (see~\cref{fig_sample_pts}(a)). Thus, $M_x = M_p+M_q$ and $M_y = 2M_p$. We typically choose a certain amount of oversampling,\footnote{Oversampling ensures that the size of the residual of the linear system reflects the continuous approximation error. The choices of oversampling factors in our experiments are such that this is always the case (in particular the factors are chosen large enough).}
\[
  M_q = 3 N_q \quad \mbox{and} \quad M_p = 2 N_p.
\]
The clustering points $\{x_i\}_{i=1}^{M_q}$ are logarithmically spaced in order to resolve the singularity of the function $f$ at $x=0$, our practical implementation being
\[x_i = 10^{-16 + 16 (i-1) / (M_q-1)}, \quad 1\leq i\leq M_q.\]
The total number of sampling points is $M_x\times M_y = 2 M_p^2 + 2M_pM_q$.

\begin{figure}[tbp]
  \centering
  \begin{overpic}[width=\textwidth]{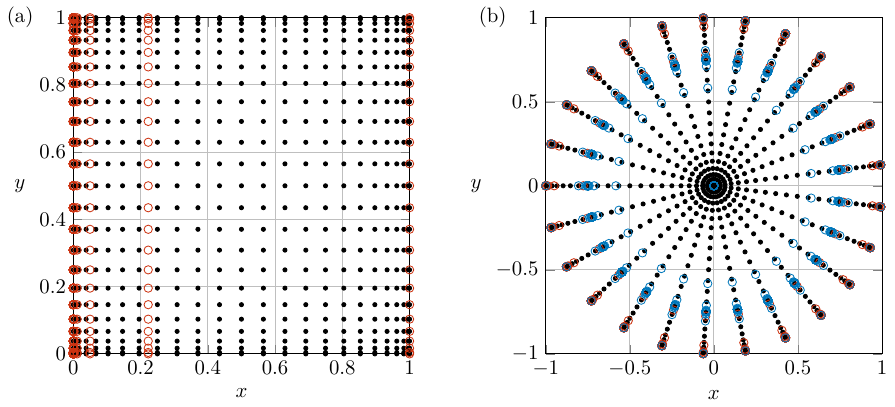}
  \end{overpic}
  \caption{(a) The product grid of sample points used for evaluating a function defined on a square domain. The sample grid is a superposition of product Chebyshev points in the $x$ and $y$ coordinates shown as black dots, and a product of points clustering exponentially fast to $x=0$ with Chebyshev points in the $y$ coordinates, highlighted in red. (b) Sample grid for a function defined on the unit disk with singularities located at $r=3/4$ and $r=1$. The grid is constructed using Chebyshev points in the radial direction and trigonometric points in the angular direction, along with points clustering towards $r=3/4$ (in blue) and $r=1$ (in red) in the radial direction.}
  \label{fig_sample_pts}
\end{figure}

The tensor-product method described in this section generalizes naturally when the function has several straight-line singularities located at different points in the domain, including a mix of horizontal and vertical lines. As an example, for singularities located at $x=x_0$ and $y=y_0$, the rational approximant has the form
\begin{equation} \label{eq_tensorrational}
  r(x,y) = \begin{bmatrix}
    R(x,x_0) & P(x)
  \end{bmatrix}
  C
  \begin{bmatrix}
    R(y,y_0)^\top \\
    P(y)^\top
  \end{bmatrix},
\end{equation}
where the term $R(z,z_0)$ contains the basis of rational functions whose poles cluster exponentially close to $z=z_0$ as $R(z,z_0) = \begin{bmatrix} \frac{p_1}{(z - z_0) - q_1} & \cdots & \frac{p_{N}}{(z - z_0)-q_{N_1}} \end{bmatrix}$, and $P(z) = \begin{bmatrix}P_0(z) &\cdots & P_{N_p}(z) \end{bmatrix}$. Here, $C$ is a square matrix of $(N_q+N_p+1)\times (N_q+N_p+1)$ coefficients. Similar constructions can be made involving more terms $R(x,x_k)$ and $R(y,y_k)$ when more singularities are present. The sample grid is constructed as a superposition of Chebyshev points in the $x$ and $y$ coordinates, along with points clustering exponentially fast to each singularity (see~\cref{fig_sample_pts}(a)).

A variant of this scheme when the function is periodic in one direction can employ trigonometric polynomials in the rational representation~\eqref{eq_tensor_rational}, along with trigonometric (i.e., equispaced) sampled points. For example,~\cref{fig_sample_pts}(b) displays the sample grid for a function defined on the unit disk with singularities located at $r=3/4$ and $r=1$. The grid is constructed using Chebyshev points in the radial direction and trigonometric points in the angular direction (as black dots), along with points clustering towards $r=3/4$ (in blue) and $r=1$ (in red) in the radial direction.

\subsection{Regularized least squares fitting}
In general, the system matrices $A$ and $B$ introduced in~\cref{eq_least_square} are heavily ill-conditioned, yet accurate approximations can still be found using regularization techniques~\cite{adcock2019frames,adcock2020fna2,herremans2023resolution}. We regularize and solve~\cref{eq_least_square} with the truncated singular value decomposition (SVD) algorithm~\cite{engl1996regularization,hansen2013least,neumaier1998solving}. Due to the tensor-product structure of the rational representation~\cref{eq_tensor_rational}, the problem~\cref{eq_least_square} is equivalent to the following linear least squares problem:
\begin{equation} \label{eq:linearizedLS}
  \min_{\mathbf{c} \in \mathbb{C}^{(N_q+N_p+1)(N_p+1)}} \norm{(B \otimes A)\mathbf{c} - \mathbf{f}}_2,
\end{equation}
where $\mathbf{c} = \text{vec}(C)$ and $\mathbf{f} = \text{vec}(F)$ are the vectors obtained by stacking the columns of $C$ and $F$. \cref{alg_TSVD} solves this least squares problem using the truncated SVD of the matrix $B \otimes A$. Note that the algorithm employs $B^\top$ rather than $B^*$ even when $B$ is complex-valued and that $\odot$ denotes the Hadamard product of matrices.

\begin{algorithm}
  \caption{Truncated SVD solver}
  \label{alg_TSVD}
  \begin{algorithmic}[1]
    \Require $A \in \mathbb{C}^{M_A \times N_A}$, $B \in \mathbb{C}^{M_B \times N_B}$, $F \in \mathbb{C}^{M_A \times M_B}$, relative tolerance $\epsilon > 0$
    \Ensure $C \in \mathbb{C}^{N_A \times N_B}$ such that $ACB^\top \approx F$
    \State Compute the SVD of $A$ and $B$, $A = U_A\Sigma_A V_A^*$ and $B = U_B\Sigma_B V_B^*$.
    \State Compute $S =  \text{diag}(\Sigma_A^\dagger) (\text{diag}(\Sigma_B^\dagger))^\top$.
    \State Set all entries of $S$ larger than $1/(\epsilon \kern1pt \sigma_{1}(A) \kern1pt \sigma_{1}(B))$ to zero.
    \State Return $C = V_A (S \odot (U_A^*F(U_B^*)^\top)) V_B^\top$.
  \end{algorithmic}
\end{algorithm}

We emphasize that \cref{alg_TSVD} is computationally efficient and exploits the tensor-product approximation of the rational representation~\cref{eq_tensor_rational} as it only requires to compute the singular value decomposition of the matrices $A$ and $B$ instead of the SVD of the large Kronecker product $B\otimes A$. When the rational or polynomial degree is large, one can alternatively reduce the computational cost by computing the truncated SVD of $A$ and $B$ using the randomized SVD~\cite{halko2011finding,martinsson2020randomized}.

The ill-conditioning of the matrices $A$ and $B$, and hence of $B \otimes A$, is a consequence of the (near-)redundancy of the proposed basis functions in~\cref{eq_tensor_rational}.
It indicates that the error on the coefficients can be arbitrarily large; however, this does not mean that a small residual cannot be obtained. We provide a bound on the residual in~\cref{th:tsvd} by exploiting an existing result~\cite[Lem.~3.3]{coppe2020az} on the accuracy of regularized least squares fitting. The typical result in ill-conditioned least squares fitting is that solutions with small residuals can be found numerically if such a solution exists with a modest coefficient norm. In addition, a high level of oversampling ensures that a small residual corresponds to high approximation accuracy.

\begin{theorem}\label{th:tsvd}
  Let $A \in \mathbb{C}^{M_A \times N_A}$, $B \in \mathbb{C}^{M_B \times N_B}$, $F \in \mathbb{C}^{M_A \times M_B}$, and choose a relative threshold parameter $0<\epsilon < 1$. Let $C\in \mathbb{C}^{N_A\times N_B}$ be the coefficient matrix computed by Algorithm~\ref{alg_TSVD_sep}. Then,
  \[
    \norm{F - ACB^\top}_{\textup{F}} \leq \inf_{X \in \mathbb{C}^{N_A \times N_B}} \left\{ \norm{F-AXB^\top}_{\textup{F}} + \epsilon \norm{A}_2 \norm{B}_2 \norm{X}_{\textup{F}} \right\}.
  \]
\end{theorem}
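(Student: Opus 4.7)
The plan is to vectorize the bilinear least-squares problem and reduce the claim to the univariate bound \cite[Lem.~3.3]{coppe2020az} for TSVD-regularized linear least squares. Using the standard identity $\text{vec}(AXB^\top) = (B \otimes A)\,\text{vec}(X)$, I will rewrite $\norm{F - AXB^\top}_{\textup{F}} = \norm{(B \otimes A)\,\text{vec}(X) - \text{vec}(F)}_2$ and $\norm{X}_{\textup{F}} = \norm{\text{vec}(X)}_2$, and combine these with the Kronecker-product identity $\norm{B \otimes A}_2 = \norm{A}_2\norm{B}_2$. Once the vectorization is in place, the theorem reduces to showing that $\text{vec}(C)$ produced by the algorithm is the TSVD-regularized solution of $\min_{\mathbf c}\norm{(B\otimes A)\mathbf c - \text{vec}(F)}_2$ with truncation threshold $\epsilon\,\sigma_1(B\otimes A) = \epsilon\,\sigma_1(A)\sigma_1(B)$, because the cited lemma then furnishes precisely the needed bound in vectorized form and the norm identities just listed transport it back to the statement.

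The central verification, and the main obstacle, is to confirm that \cref{alg_TSVD} realizes this TSVD-regularized solution for the Kronecker system without ever forming $B \otimes A$. For this I will rely on the Kronecker SVD: from $A = U_A\Sigma_A V_A^*$ and $B = U_B\Sigma_B V_B^*$ one gets $B \otimes A = (U_B \otimes U_A)(\Sigma_B \otimes \Sigma_A)(V_B \otimes V_A)^*$, whose singular values are the products $\sigma_i(A)\sigma_j(B)$. The entries $S_{ij} = 1/(\sigma_i(A)\sigma_j(B))$ formed in Step~2 are the corresponding inverse singular values, and the thresholding in Step~3 against $1/(\epsilon\sigma_1(A)\sigma_1(B))$ is exactly the truncation of the Moore--Penrose inverse of $B \otimes A$ at level $\epsilon\norm{B\otimes A}_2$. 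To close the loop I will unvectorize using $(P \otimes Q)\,\text{vec}(X) = \text{vec}(QXP^\top)$, noting that the diagonal Kronecker scaling $\Sigma_B \otimes \Sigma_A$ acts on $\text{vec}(X)$ as the entrywise Hadamard product of $X$ with the outer product of the two singular-value vectors. This bookkeeping matches Step~4's expression $V_A\bigl(S\odot(U_A^*F(U_B^*)^\top)\bigr)V_B^\top$ to the reshape of $(V_B\otimes V_A)\tilde{\Sigma}^\dagger(U_B\otimes U_A)^*\text{vec}(F)$.

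Once the algorithm has been identified as TSVD applied to $B\otimes A$ at threshold $\epsilon\,\sigma_1(B\otimes A)$, the last step is a direct invocation of \cite[Lem.~3.3]{coppe2020az} on the vectorized problem, followed by ranging the infimum over $\mathbf x = \text{vec}(X)$ and translating each term through the norm identities of the first paragraph. The only genuinely delicate point throughout is the transpose/conjugate bookkeeping in the $\text{vec}$--Kronecker calculus, which is subtle here because the problem is formulated with $B^\top$ rather than $B^*$; this is precisely what forces the appearance of $(U_B^*)^\top$ rather than $U_B$ in Step~4. Everything else in the argument is routine.
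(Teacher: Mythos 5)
Your proposal is correct and follows essentially the same route as the paper's proof: vectorize via $\text{vec}(AXB^\top) = (B\otimes A)\text{vec}(X)$, use the Kronecker SVD to identify Algorithm~\ref{alg_TSVD} as TSVD regularization of $B\otimes A$ at threshold $\epsilon\,\sigma_1(A)\sigma_1(B)$, invoke \cite[Lem.~3.3]{coppe2020az}, and transport the bound back through the norm identities. You also correctly flag the $B^\top$-vs-$B^*$ bookkeeping that produces $(U_B^*)^\top$ in Step~4, which is exactly the subtlety the paper handles.
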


\begin{proof}
  We first prove that~\cref{alg_TSVD} solves~\cref{eq:linearizedLS}, and hence~\cref{eq_least_square}, using the truncated SVD of $B \otimes A$.
  \cite{van1993approximation} shows that the singular value decomposition of $B \otimes A$ follows from the SVD of $A$ and $B$ as
  \[
    B \otimes A = (U_B \otimes U_A) (\Sigma_B \otimes \Sigma_A) (V_B \otimes V_A)^*.
  \]
  Consider the diagonal matrix $\Sigma = \Sigma_B \otimes \Sigma_A$ containing the singular values of $B \otimes A$:
  \[
    \{ \sigma_k(A) \sigma_l(B) \; \vert \; 1 \leq k \leq \min(M_A,N_A),  \; 1 \leq l \leq \min(M_B,N_B) \}.
  \]
  All singular values smaller than $\epsilon \sigma_1(A) \sigma_1(B)$ are set to zero in~\cref{alg_TSVD}, resulting in a new matrix denoted by $\Sigma_\epsilon$.
  A coefficient vector $\mathbf{c}$ can now be computed using the Moore--Penrose pseudoinverse, denoted by $\dagger$, of the truncated SVD of $B \otimes A$,
  \[
    \mathbf{c} = (V_B \otimes V_A) \Sigma_\epsilon^\dagger (U_B \otimes U_A)^* \mathbf{f}.
  \]
  This can again be condensed into a matrix equation using the dense matrix $S \in \mathbb{C}^{\min(M_A, N_B) \times \min(M_B, N_B)}$ whose entries contain diag($\Sigma_\epsilon$), i.e., the (truncated) singular values of $B \otimes A$.
  Also using the identities $(A \otimes B)^* = A^* \otimes B^*$ and $\text{vec}(AXB^\top) = (B \otimes A)\text{vec}(X)$ we obtain
  \[
    C = V_A (S \odot (U_A^* F (U_B^*)^\top))V_B^\top.
  \]
  This proves that~\cref{alg_TSVD} computes a solution to the least squares problem~\cref{eq:linearizedLS} using the truncated SVD of $B \otimes A$.
  A bound on the residual follows by applying \cite[Lem.~3.3]{coppe2020az} with an absolute truncation threshold $\varepsilon = \epsilon \norm{A}_2 \norm{B}_2$,
  \[
    \| \mathbf{f} - (B \otimes A) \mathbf{c} \|_2 \leq \inf_{\mathbf{x} \in \mathbb{C}^{N_A N_B}}\{ \| \mathbf{f} - (B \otimes A) \mathbf{x} \|_2 + \epsilon \|{A}\|_2 \|{B}\|_2 \| \mathbf{x} \|_2 \},
  \]
  which is equivalent to
  \[
    \| F - ACB^\top \|_\F \leq \inf_{X \in \mathbb{C}^{N_A \times N_B}}\{ \| F - AXB^\top \|_\F + \epsilon \|{A}\|_2 \|{B}\|_2 \| X \|_\F \},
  \]
  where we used the fact that $\norm{X}_\F =  \norm{\text{vec}(X)}_2=\norm{\mathbf{x}}_2$.
\end{proof}

\cref{th:tsvd} shows that the residual is of order $\epsilon$ for functions $f$ that can be approximated to within $\epsilon$ by a tensor-product approximant with bounded coefficients, i.e., high accuracy is obtained if there exist coefficients $X \in \mathbb{C}^{N_A \times N_B}$ such that $\norm{F-AXB^\top}_{\textup{F}} \approx \epsilon$ with $\norm{X}_{\textup{F}}\approx 1$. Thus, the condition number of the system does not adversely affect the accuracy of the calculations. The numerical examples in this paper use $\epsilon = 10^{-14}$. This parameter choice is discussed later in~\cref{sec:parameterchoices}.

\begin{remark}
  An alternative approach to solving the least squares problem is to regularize $A$ and $B$ independently, rather than the product $B \otimes A$. This results in a more straightforward algorithm, explained in~\cref{sec_appendix}, with similar theoretical guarantees (see~\cref{th:tsvd_sep}). However, this approach is less numerically stable for very small truncation thresholds, leading to our preference for \cref{alg_TSVD}.
\end{remark}

\subsection{Convergence analysis} \label{sec_theory}

In \cite[\S2]{gopal2019solving}, root-exponential convergence is proven for the lightning approximation to a univariate complex function on a polygonal subset of $\mathbb{C}$ with branch point singularities at the corners. We prove a similar result for the proposed bivariate approximant for functions on a rectangular subset of $\mathbb{R}^2$ with a continuum of singularities along one edge. To that end, we formulate a few lemmas in~\cref{sec_appendix2} that summarize and slightly extend the results of \cite{gopal2019solving}. The results imply the robustness of rational approximation with clustering poles with respect to the order of the singularity. Based on those results, we can show the existence of bivariate rational approximations to functions with singularities along the edge of a rectangle, even if the order of the singularity varies along the edge.

\begin{theorem}[Multivariate rational convergence analysis] \label{thm_main}
  Let $\Pi$ be the square domain $(0, 1] \times [0, 1]$ and $f:\Pi\to\C$ be an analytic function in $\Pi$, with the following three properties near the left segment $\{0\}\times [0,1]$:
  \begin{enumerate}[wide, labelwidth=0pt, labelindent=0pt]
    \item The boundary trace defined as $h(z_2) = f(0,z_2)$ is an analytic function of $z_2 \in [0,1]$.
    \item For each $z_2 \in [0,1]$, $f$ satisfies the following uniform growth condition:
          \[
            f(z_1,z_2) - h(z_2) = \mathcal{O}(\lvert z_1 \rvert^\delta), \quad \text{as } z_1 \to 0,
          \]
          for some $\delta > 0$ that is independent of $z_2$.
    \item For each $z_2 \in [0,1]$, the univariate function $f(\cdot,z_2)$ can be analytically continued to a disk near $z_1 = 0$ with a slit along the negative real axis.
  \end{enumerate}
  Then, for $1\leq n < \infty$, there exist a multivariate rational function $r_{n}$, of the form
  \begin{equation}
    r_{n}(z_1,z_2) = \sum_{j = 0}^{n-1} \frac{s_j(z_2)}{z_1-\beta_j} + p(z_1,z_2),
    \label{eq:approx}
  \end{equation}
  such that
  \[
    \| f - r_{n} \|_{\infty,\Pi} = \mathcal{O}(e^{-C\sqrt{n}}),\quad \text{as } n \to \infty,
  \]
  for some $C > 0$. Here, $s_j(z_2)$ is a univariate polynomial of degree scaling as $\mathcal{O}(\sqrt{n})$, $p(z_1,z_2)$ is a bivariate polynomial of degree scaling as $\mathcal{O}(\sqrt{n})$ as $n \to \infty$, and the poles $\beta_j$ are finite and exponentially clustered towards $0$, with arbitrary clustering parameter $\sigma > 0$ as defined by~\cref{eq:scaledpoles}.
\end{theorem}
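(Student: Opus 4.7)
The plan is to reduce the bivariate problem to a parameterized family of univariate problems, and then handle the $z_2$-dependence by a second round of polynomial approximation. First, I would subtract the boundary trace by writing $g(z_1,z_2) = f(z_1,z_2) - h(z_2)$. By property (1), $h$ extends analytically to a neighborhood of $[0,1]$ and admits a polynomial approximation of degree $\mathcal{O}(\sqrt n)$ with exponential accuracy. By properties (2) and (3), $g$ vanishes like $|z_1|^\delta$ uniformly in $z_2$, and each univariate slice $g(\cdot,z_2)$ still satisfies the slit-disk analytic-continuation hypothesis near $z_1=0$. For each fixed $z_2$, I would then apply the univariate construction of \cref{sec_appendix2} to $g(\cdot,z_2)$, producing an approximant
\[
\tilde r_n(z_1;z_2) = \sum_{j=0}^{n-1}\frac{c_j(z_2)}{z_1-\beta_j} + \tilde p(z_1;z_2),
\]
where the poles $\beta_j$ depend only on $\sigma$ and the geometry (and so are identical for every $z_2$), while $\tilde p$ is a polynomial in $z_1$ of degree $\mathcal{O}(\sqrt n)$. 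The univariate result gives $\|g(\cdot,z_2) - \tilde r_n(\cdot;z_2)\|_\infty = \mathcal{O}(e^{-C\sqrt n})$ uniformly in $z_2$, because the relevant constants ($\delta$, the slit-disk radius, and the uniform bound on $g$) are independent of $z_2$ by hypothesis.

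The next step is the crucial one: I need the coefficient functions $c_j(z_2)$ and the $z_1$-coefficients of $\tilde p(z_1;z_2)$ to be analytic functions of $z_2$ on a neighborhood of $[0,1]$. Because the rational and polynomial basis in $z_1$ is fixed independently of $z_2$, these coefficients are obtained from $g(\cdot,z_2)$ by a single bounded linear procedure (an interpolation or projection underlying the lemmas of \cref{sec_appendix2}). Joint analyticity of $g$ in $(z_1,z_2)$ therefore transfers to analyticity of each $c_j(z_2)$ and each coefficient of $\tilde p$ on a complex neighborhood of $[0,1]$, with uniform bounds. Consequently each $c_j(z_2)$ admits a polynomial approximation $s_j(z_2)$ of degree $\mathcal{O}(\sqrt n)$ with error decaying faster than $e^{-C'\sqrt n}$, and likewise for the coefficients of $\tilde p$ viewed as functions of $z_2$.

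Finally I would assemble everything: replace each $c_j$ by $s_j$, replace each $z_1$-coefficient of $\tilde p$ by its polynomial $z_2$-approximation, and absorb the polynomial approximation of $h(z_2)$ into the final polynomial part $p(z_1,z_2)$, which ends up of bidegree $\mathcal{O}(\sqrt n)$. The total error is bounded by the sum of the boundary approximation error, the univariate residual $\mathcal{O}(e^{-C\sqrt n})$, and the aggregated polynomial-in-$z_2$ approximation errors, yielding $\mathcal{O}(e^{-C''\sqrt n})$ for some $C'' > 0$.

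The main obstacle is this aggregation step. Even though each individual coefficient $c_j(z_2)$ is analytic and polynomially approximable with root-exponential accuracy, there are $n$ of them, and they are weighted against the basis functions $1/(z_1-\beta_j)$, whose supremum norms grow as $\beta_j \to 0$. To preserve root-exponential convergence one needs uniform-in-$n$-and-$j$ bounds on $\|c_j\|_\infty$ and on the strip of analyticity of $c_j(z_2)$, i.e., a Lebesgue-type stability estimate for the fixed-pole univariate approximation. Establishing these stability bounds is the principal technical ingredient, and it is essentially what the lemmas of \cref{sec_appendix2} are expected to supply.
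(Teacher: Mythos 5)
Your route is genuinely different from the paper's, and the difference matters. The paper expands in $z_2$ \emph{first}, writing $u(z_1,\cdot)$ as a Chebyshev series $\sum_{k=0}^K a_k(z_1)T_k(z_2)$ with $K=\mathcal{O}(\sqrt n)$, and then applies the univariate lightning result (\cref{lem:gopal2019solving_lemma2}) to each of the $\mathcal{O}(\sqrt n)$ coefficient functions $a_k(z_1)$. This ordering is chosen precisely because the two problems you flag in your final paragraph are then trivial: the $a_k(z_1)$ are given by an explicit integral formula which immediately transfers the slit-disk analyticity and the $\mathcal{O}(|z_1|^\delta)$ bound from $u$ to $a_k$, the number of terms is only $\mathcal{O}(\sqrt n)$, and the factors $T_k(z_2)$ are bounded by one on $[0,1]$. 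The aggregation cost is just a harmless factor $K = \mathcal{O}(\sqrt n)$, absorbed by halving $C$.

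Your ordering — lightning in $z_1$ first, then polynomial approximation of the residues $c_j(z_2)$ — has a real gap that you partly identify but misattribute. First, the claim that the $c_j(z_2)$ are analytic because they come from a ``single bounded linear procedure'' is not established: \cref{lem:gopal2019solving_lemma1,lem:gopal2019solving_lemma2} are existence statements built on Cauchy-integral and H\"older-type estimates; they do not exhibit a linear map from $f$ to residues, nor do they bound the residues individually. Second, and more seriously, even granting analyticity of $c_j(\cdot)$ on a fixed strip, replacing $c_j$ by a degree-$\mathcal{O}(\sqrt n)$ polynomial $s_j$ yields an error $\|c_j - s_j\|_\infty = \mathcal{O}(\|c_j\|\,e^{-C'\sqrt n})$, but this is multiplied by $\sup_{z_1\in[0,1]}|z_1-\beta_j|^{-1}\approx|\beta_j|^{-1}$, which grows like $e^{\sigma\sqrt n}$ for the smallest poles. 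To beat this you need the residues themselves to shrink at least like $|\beta_j|$, uniformly in $j$, $n$, and $z_2$ — a quantitative stability estimate on the lightning residues that appears nowhere in \cref{sec_appendix2} (and is not what those lemmas ``are expected to supply''; they supply neither linearity, nor residue bounds, nor strip-of-analyticity control). The paper's proof sidesteps all of this by putting the polynomial expansion on the outside, where the basis is uniformly bounded and the term count is only $\mathcal{O}(\sqrt n)$.
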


\begin{proof}
  Since the function $h$ is analytic on $[0,1]$, and hence on a neighborhood in the complex $z_2$-plane, it can be approximated with exponential convergence by polynomials. Its approximation can be incorporated into the polynomial term $p(z_1,z_2)$, and it remains to describe the approximation of the function $u:\Pi\to \C$ defined as
  \[
    u(z_1,z_2) = f(z_1,z_2)-f(0,z_2) = f(z_1,z_2)-h(z_2),\quad (z_1,z_2)\in \Pi.
  \]
  Let $z_1\in (0,1]$, then the Chebsyhev projection of degree $K$ to the function $u(z_1,\cdot)$, where the Chebyshev polynomials $T_k$ are scaled to $[0,1]$, is~\cite[Thm.~3.1]{trefethen2019atap}
  \[
    \hat{u}(z_1,z_2) = \sum_{k=0}^{K} ~' a_k(z_1) T_k(z_2), \quad a_k(z_1) = \frac{2}{\pi}\int_{0}^1 u(z_1,z_2) \frac{T_k(z_2)}{\sqrt{z_2 - z_2^2}} \d z_2,\quad z_2\in [0,1],
  \]
  where $'$ indicates that the first term in the series is halved. The functions $a_k:(0,1]\to\C$ satisfy the conditions of~\cref{lem:gopal2019solving_lemma2} and can therefore be approximated with root-exponential accuracy by a degree $(n+m,n)$ rational function $\hat{a}_k$ as
  \[
    \hat{a}_k(z_1) = \sum_{j=0}^{n-1} \frac{s^k_j}{z_1-\beta_j} + p_k(z_1), \quad 0 \leq k \leq K,\quad z_1\in (0,1].
  \]
  Here, the poles $\beta_j$ defined by~\cref{eq:scaledpoles} are fixed, but the degree $m$ polynomial $p_k$ and the coefficients $s^k_j \in \mathbb{C}$ are dependent on $k$. Since $D$, $b$ and $\Omega$ introduced in~\cref{lem:gopal2019solving_lemma2} can be chosen such that they are independent of $k$, and also $\|a_k\|_{\infty,\Omega}$ can be bounded independently of $k$ (since the Chebyshev polynomials themselves are uniformly bounded), the error of each term is bounded by
  \[
    \norm{ a_k - \hat{a}_k }_{\infty,[0,1]} \leq Ae^{-C\sqrt{n}},
  \]
  for sufficiently large $n$ and for some constants $A,C > 0$. The fact that $\norm{a_k}_{\infty,\Omega}$ is bounded independently of $k$ also implies that one can choose the same degree $m = E\sqrt{n}$ for each $k$ for a fixed $E > 0$. Furthermore, we can choose the same $K$ for each $z_1 \in (0,1]$, based on the smallest Bernstein ellipse in which $u(z_1,\cdot)$ is analytic as a function of $z_2$, such that
  \begin{equation} \label{eq_approx_u_u_hat}
    \norm{u - \hat{u}}_{\infty,\Pi} = \mathcal{O}(e^{-C'K}),
  \end{equation}
  for some $C' > 0$ depending on the radius of that ellipse.

  Substituting $\hat{a}_k(z_1)$ into $\hat{u}(z_1,z_2)$ results in a bivariate rational function
  \begin{equation} \label{eq_def_q}
    q(z_1,z_2) \coloneqq \sum_{k=0}^{K} ~' \hat{a}_k(z_1) T_k(z_2) = \sum_{j=0}^{n-1} \frac{s_j(z_2)}{z_1 - \beta_j} + \sum_{k=0}^K p_k(z_1)T_k(z_2),
  \end{equation}
  where polynomial $s_j \coloneqq \sum_{k=0}^K s_j^k T_k$ has degree $K$ for $0\leq j\leq n-1$. The approximation error between $u$ and $q$ can be bounded uniformly in $\Pi$ using the triangular inequality,
  \[
    \norm{u - q}_{\infty,\Pi} \leq \norm{u - \hat{u}}_{\infty,\Pi} + \norm{\hat{u} - q}_{\infty,\Pi}.
  \]
  The first term on the right-hand side is bounded by~\cref{eq_approx_u_u_hat}. Considering the second term and the definition of $q$ in \cref{eq_def_q}, for $(z_1,z_2)\in \Pi$ we have
  \[
    |\hat{u}(z_1,z_2) - q(z_1,z_2)| \leq \sum_{k=0}^K |a_k(z_1) - \hat{a}_k(z_1)| |T_k(z_2)|,
  \]
  which we bound by taking the supremum over $z_1\in [0,1]$ and $z_2\in [0,1]$ as
  \begin{equation} \label{eq_approx_u_hat_q}
    \|\hat{u} - q\|_{\infty,\Pi} \leq \sum_{k=0}^K \|a_k - \hat{a}_k\|_{\infty,[0,1]} \norm{T_k}_{\infty,[0,1]} = \mathcal{O}(K e^{-C \sqrt{n}}).
  \end{equation}
  We now combine~\cref{eq_approx_u_u_hat,eq_approx_u_hat_q} and select $K = (C/C') \sqrt{n}$ such that
  \[\norm{u - q}_{\infty,\Pi} = \mathcal{O}\left(\sqrt{n}e^{-C\sqrt{n}}\right)=\mathcal{O}\left(e^{-C\sqrt{n}/2}\right),\]
  which concludes the proof.
\end{proof}

We add a few remarks to this result. First, we discuss the differences between the poles~\eqref{eq_poles} we use in practice and the poles~\cref{eq:scaledpoles} which we analyze. Following~\cite{gopal2019solving} we have analyzed poles with uniform exponential clustering. It was observed and explained in~\cite{trefethen2021clustering} that convergence rates can improve by a factor of 2 by using tapered exponential clustering of poles instead. The poles of~\eqref{eq_poles} use a tapered distribution for this reason. This is not a major difference, as by the same method of proof as ours and that of~\cite{gopal2019solving} essentially any set of points with $\sqrt{n}$ exponential scaling leads to root-exponential convergence and we make no claims about rates. Another minor difference is the scaling by the imaginary unit. Here, too, it is clear in the proof that scaling does not affect root-exponential convergence itself, only the precise convergence rate. Our choice of complex scaling allows us to approximate singularities within the domain of approximation. In the univariate case, a similar setting is analyzed in~\cite[\S5.1]{herremans2023resolution} in which clustered poles make an angle with respect to the approximation interval in the complex plane. More information on the choice of poles, including the spacing parameter $\sigma$, is given in~\cref{sec:parameterchoices}.


Second, although the basis of the approximation space has the structure of a tensor product, namely a rational function in $z_1$ times a polynomial in $z_2$, the space itself captures more interesting functions than low-rank functions. In particular, the space contains approximations to functions with continuously varying orders of algebraic singularities, as long as there is a lower bound $\delta > 0$ on the order.

Third, the result of this theorem is restricted to functions with singularities along one edge, but there is a straightforward generalization of the approximation space to deal with singularities along multiple edges. Indeed, the univariate rational approximations considered in~\cite{herremans2023resolution} have the form of partial fractions in combination with a polynomial term, hence a tensor product in the $x$ and $y$ variables leads to four terms in total. This is exactly the form we propose in~\cref{eq_tensorrational}. The results of~\cite{herremans2023resolution} immediately translate into favorable approximation results for separable functions such as $f(x,y) = \sqrt{x}\sqrt{y}$. However, due to the robustness of the clustering poles with respect to the nature of the singularity, the space can accurately capture a much broader class of functions. Particularly relevant classes of functions are solution sets of elliptic PDEs with analytic data, which give rise to edge and corner singularities, see~\cite[Sec.~1]{stephan1988singularities} and references therein. These function spaces have been extensively studied to investigate the convergence behavior of finite element methods with $hp$-refinement and neural networks, which are known to be related to rational approximation methods with exponentially clustered poles~\cite{huybrechs2023sigmoid}. Recent research~\cite{marcati2023exponential} proves exponential expressivity with stable ReLU neural networks for such classes on polytopal domains by constructing tensorized $hp$-approximations to point and edge singularities.

Finally, we note that the statement of the proof is about the accuracy of the approximation space, and not about the stability of the representation in partial fractions form. The latter is important in practice, and we will carefully select the normalization of the basis functions and rational and polynomial degrees such that the norms of the coefficients in the chosen basis are modest in size (see~\cite{adcock2019frames,adcock2020fna2} for the general theory and~\cite[\S4]{herremans2023resolution} for a discussion in the context of rational approximations). We do not actually prove that such representations always exist. A result along those lines is proven for the specific case of rational approximations to $\sqrt{z}$ in~\cite{herremans2023resolution}. We emphasize that the numerical stability of the representation and of the least squares approximation algorithm is the reason why approximations can be found to have very high accuracy for the examples in this paper, in some cases even close to machine precision.

\subsection{Numerical examples} \label{sec_numerical}

As a first numerical example, we consider the approximation of the function $f_1(x,y) = (x(1-x))^{\frac14 + y} \sqrt{y(1-y)}$ on the unit square. This function has singularities on all four edges of the square, with varying order. We approximate $f_1$ using a tensor-product rational function with poles clustering towards the boundary at $x=0$, $x=1$, $y=0$, and $y=1$, using the method introduced in \cref{sec_construction_tensor}. Here, we use a rational degree of $N_q=150$ along with a polynomial degree of $N_p=16$ for the smooth part of the approximation. We display in \cref{fig_tensor}(a) the function $f_1$ that we aim to approximate along with the approximation error $|f_1(x,y)-r(x,y)|$ by the rational function $r$ in \cref{fig_tensor}(b). We evaluate the rational approximant on an equispaced grid with $1000\times 1000$ points and obtain a maximum approximation error of $4.6\times 10^{-15}$. Moreover, the residual of the least-square problem \eqref{eq_least_square} has an infinity norm equal to $1.5\times 10^{-14}$, which is of the same order of magnitude as the approximation error, indicates that the error is uniformly small even exponentially close to the boundary.

\begin{figure}[tbp]
  \centering
  \begin{overpic}[width=\textwidth]{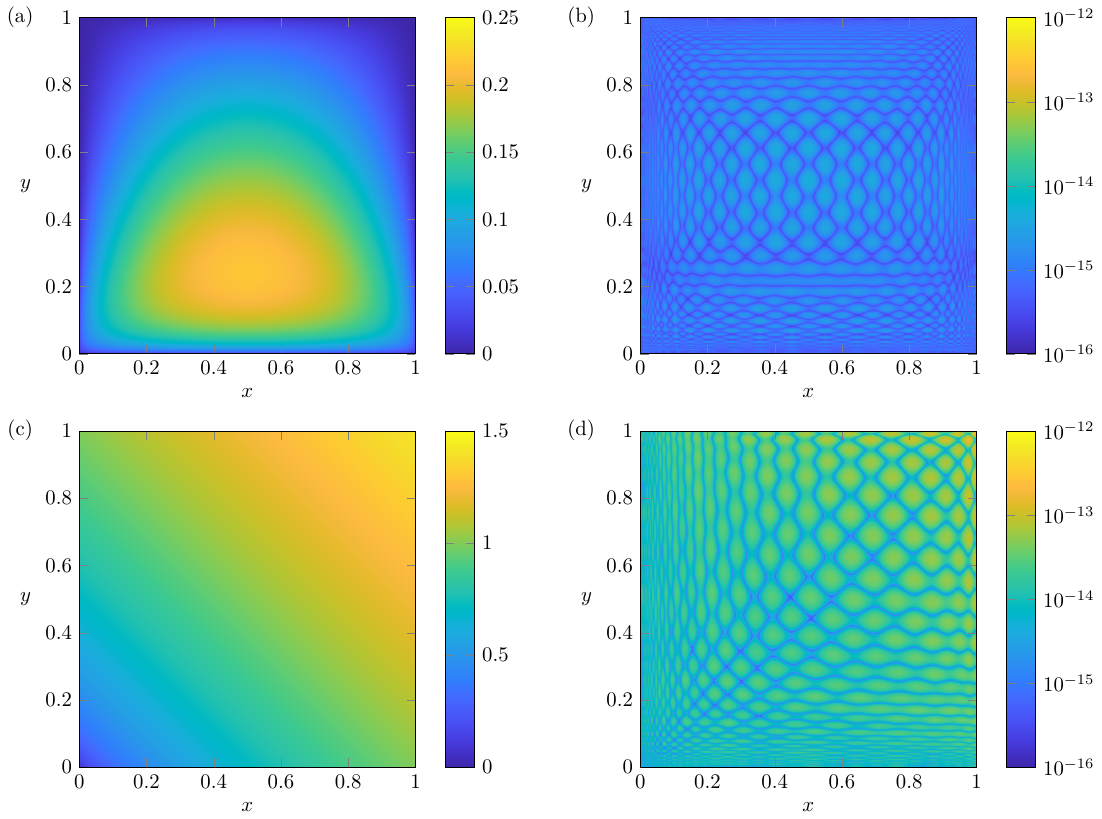}
  \end{overpic}
  \caption{(a) The function $f_1(x,y) = (x(1-x))^{\frac14 + y} \sqrt{y(1-y)}$ along with (b) the approximation error $|f_1(x,y)-r(x,y)|$ by a tensor-product rational function $r$ with lines of poles clustering towards the boundary of the square. (c)-(d) Same as (a)-(b) with the function $f_2(x,y)=\sqrt{x+y}$ and its rational approximant with poles located near the lines $x=0$ and $y=0$.}
  \label{fig_tensor}
\end{figure}

Then, we examine the function $f_2(x,y) = \sqrt{x+y}$ defined on $[0,1]^2$ with square-root singularity at $(0,0)$. We note that this function, plotted in \cref{fig_tensor}(c), is not separable as it cannot be written as a product of two one-dimensional functions. We visualize the approximation error with a rational function of degree $N_q= 150$ with lines of poles at $x=0$ and $y=0$, computed using the tensor-product method of \cref{sec_construction_tensor} in \cref{fig_tensor}(b). We observe that the approximation error is relatively uniform on the domain, with a maximum error of $1.6\times 10^{-13}$ on a $1000\times 1000$ equispaced grid. In \cref{fig_convergence}(a), we plot the maximum approximation error of the rational function with respect to the square root of the rational degree $N_q$ on a grid consisting of points that are distributed similarly as the sampling points, therefore containing points clustered towards the singularity, yet which is much finer than the sampling grid. The maximum error on this grid gives a reliable indication of the uniform error on $[0,1] \times [0,1]$. Here, we fix $N_p=16$ and $M_1=M_2=1200$ to observe the decay rate of the error. We find that the approximation error decays at a square-root exponential rate, i.e.,
\[\|f_2-r_{N_q}\|_{\max} = \mathcal{O}(e^{-1.16\sqrt{N_q}}),\]
as illustrated by the dashed line in \cref{fig_convergence}(a), which is in agreement with the theoretical results of \cref{sec_theory}. Next in \cref{fig_convergence}(b), we display the uniform approximation error between $f$ and its rational approximant as a function of the rational and polynomial degrees $N_q$ and $N_p$. We observe that the approximation error decays root-exponentially fast with respect to $N_q$ for a sufficiently high polynomial degree $N_p$. The figure confirms that a polynomial degree scaling as $N_p = \mathcal{O}(\sqrt{N_q})$ is sufficient.

\begin{figure}[tbp]
  \centering
  \begin{overpic}[width=\textwidth]{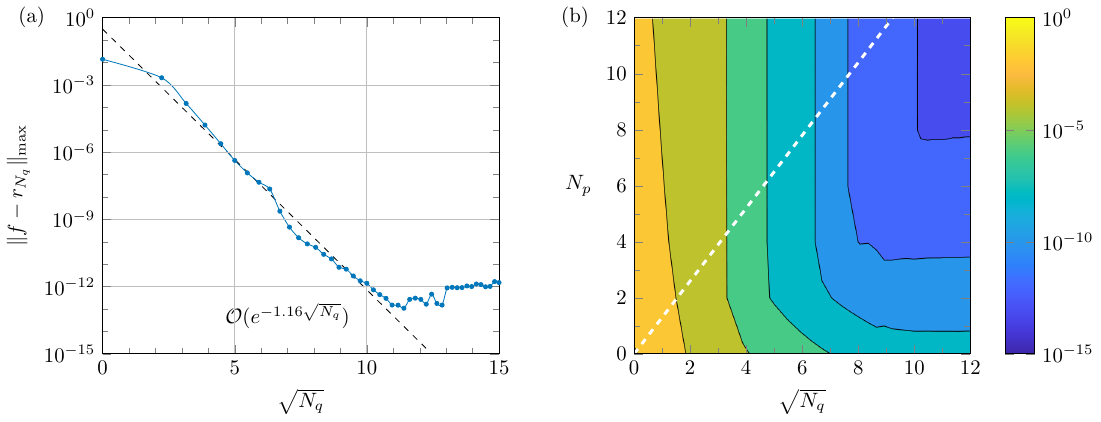}
  \end{overpic}
  \caption{(a) Convergence of the multivariate rational approximant $r$ to $f_2(x,y)=\sqrt{x+y}$ with respect to the square-root of the rational degree $N_q$. The dashed line illustrates the empirical convergence rate. (b) Approximation error between $f_2$ and its rational approximant as a function of the square root of the rational degree $N_q$ and the polynomial degree $N_p$. The white dashed line shows the scaling of the polynomial degree $N_p = 1.3\sqrt{N_q}$ employed in this work. Both figures display the maximum error on a fine independent grid, similarly distributed as the sampling grid.}
  \label{fig_convergence}
\end{figure}

Finally, we consider the multivariate rational approximation of the following function defined on the unit disk $\Omega=D(0,1)$:
\begin{equation} \label{eq_function_disk}
  f_3(r,\theta) =
  \begin{cases}
    \cos(10 r+10 \theta),            & r\in [0,3/4],\,\theta\in[-\pi,\pi), \\
    -\sqrt{1-r}\cos(10 r-10 \theta), & r\in (3/4,1],\,\theta\in[-\pi,\pi).
  \end{cases}
\end{equation}
We note that this function has a square-root singularity along the unit circle and a discontinuity at $r=3/4$. To approximate this function, we use a tensor-product rational function consisting of a mixed Chebyshev polynomial basis and rational functions in the radial direction $r$ with poles clustering towards $r=1$ and $r=3/4$ parallel to the imaginary axis, and a Fourier expansion in the angular variable $\theta$ to impose periodicity. We plot the function along with the approximation error $|f_3(x,y)-r(x,y)|$ in \cref{fig_disk}, and observe a maximum error of $3.6\times 10^{-13}$ when evaluating the rational approximant on a $1000\times 1000$ equispaced grid in the radial and angular directions.

\begin{figure}[htbp]
  \centering
  \begin{overpic}[width=\textwidth]{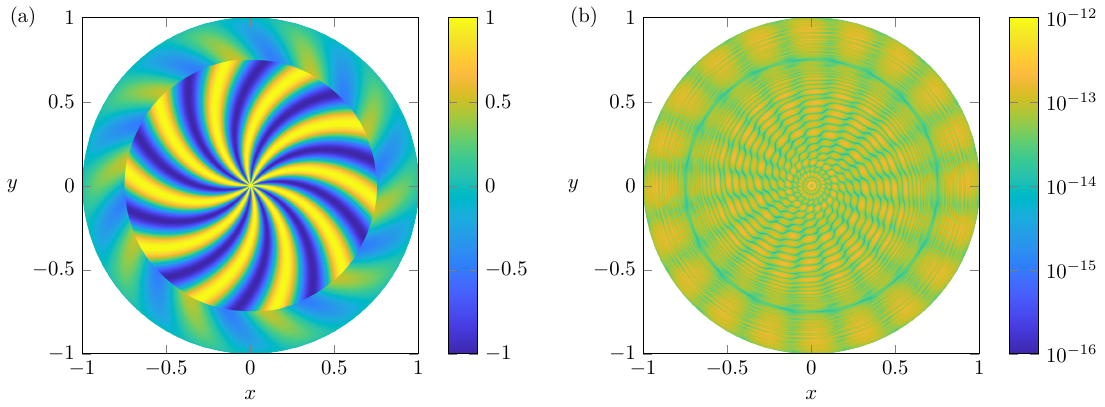}
  \end{overpic}
  \caption{(a) The function $f_3(r,\theta)$ defined in \cref{eq_function_disk} with discontinuity at $r=3/4$ and square-root singularity at $r=1$. (b) The approximation error with the rational approximant, whose poles cluster exponentially fast towards $r=3/4$ and $r=1$ from both sides in the imaginary direction.}
  \label{fig_disk}
\end{figure}

\subsection{Robust and optimal parameter choices} \label{sec:parameterchoices}
The construction proposed in~\cref{sec_construction_tensor} is designed to provide a robust approximation scheme for multivariate functions with edge singularities at known locations. In this section, we first show that the parameter choices described in~\cref{sec_construction_tensor} give satisfactory results for a variety of problems. Next, we explain how the approximation scheme can be further optimized when more information on the problem is known.

The influence of the truncation parameter $\epsilon$ used in~\cref{alg_TSVD} on the overall convergence behavior and, intrinsically related, on the size of the coefficients can be seen in~\cref{parameter_choice2} for the functions $f_1$ and $f_3$ introduced in~\cref{sec_numerical}. The maximum error is again computed on an independent grid consisting of points that are distributed similarly to the sampling points, yet which is much finer than the sampling grid. For branch point singularities, the maximum error on this grid gives a reliable indication of the uniform error on $[0,1] \times [0,1]$. For discontinuities, a small error simply indicates high precision up to machine epsilon distance away from the discontinuity. The results for $f_2$ are analogous to those of $f_1$. A comparison is made between $\epsilon = 10^{-14}$ (default, full line) and $\epsilon = 10^{-10}$ (dashed line). Most importantly,~\cref{parameter_choice2}(a) shows that the approximation achieves high precision even though the system matrices in~\cref{eq_least_square} are heavily ill-conditioned. This follows from~\cref{th:tsvd} and the fact that the coefficients of the rational approximants are small at the time of convergence, as can be seen in~\cref{parameter_choice2}(b). \cref{parameter_choice2}(b) also shows that the regularization parameter has an influence on the intermediary growth of the coefficients, a phenomenon which is analyzed in~\cite{adcock2019frames,adcock2020fna2}. In this paper a fixed value of $\epsilon = 10^{-14}$ is used, resulting in high accuracy yet suffering from a large intermediary coefficient growth. In~\cite{adcock2021frame} it is shown that this growth can be avoided by varying the regularization parameter with $N_q$.

\begin{figure}[htbp]
  \centering
  \begin{overpic}[width=\textwidth]{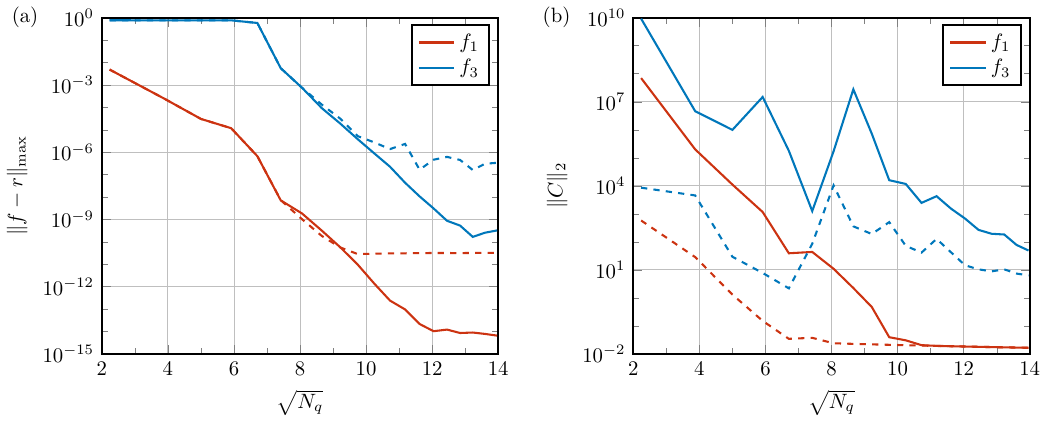}
  \end{overpic}
  \caption{(a) Approximation error between the rational approximant and the functions $f_1(x,y) = (x(1-x))^{1/4+y}\sqrt{y(1-y)}$ and $f_3(r,\theta)$ defined by~\cref{eq_function_disk} as a function of $\sqrt{N_q}$, displayed for two values of the truncated SVD parameter in \cref{alg_TSVD}: $\epsilon = 10^{-14}$ (default, full line) and $\epsilon = 10^{-10}$ (dashed line). The figure displays the maximum error on a fine independent grid, similarly distributed as the sampling grid. (b) The Frobenius norm of the coefficient matrix $C$ of the rational approximant to the functions $f_1$ and $f_3$ as a function of $\sqrt{N_q}$.  }
  \label{parameter_choice2}
\end{figure}

In~\cref{parameter_choice1}, we illustrate the influence of the parameters $\sigma$, which determines the spacing between the clustered poles, and $\epsilon$ on the accuracy of a fixed-degree rational approximant to the same functions $f_1$, $f_2$ and $f_3$. The default values $\sigma = 2\pi$ and $\epsilon = 10^{-14}$ employed in this paper are marked by a dashed line. It follows from~\cref{parameter_choice1}(a) that the approximation error shows a clear dependence on $\sigma$, materialized by a V-shaped curve for $f_1$ and $f_2$, which is similar to what has been observed in the one-dimensional case~\cite{herremans2023resolution}. This behavior stems from the balance of the truncation error for small values of $\sigma$ and the discretization error for large values of $\sigma$~\cite{herremans2023resolution,stahl1994poles,trefethen2021clustering}. The error behaves slightly differently for the discontinuous function $f_3$; the truncation error here drops abruptly once the spacing $\sigma$ is large enough such that the clustered poles cover the independent sampling grid. Of course, the pointwise error always remains ${\mathcal O}(1)$ near the discontinuity itself. \cref{parameter_choice1}(b) shows that the approximation error is nearly insensitive to the precise value of $\epsilon$, provided that it is sufficiently small compared to the achievable accuracy, which also followed from~\cref{parameter_choice1}(a).

\begin{figure}[tbp]
  \centering
  \begin{overpic}[width=\textwidth]{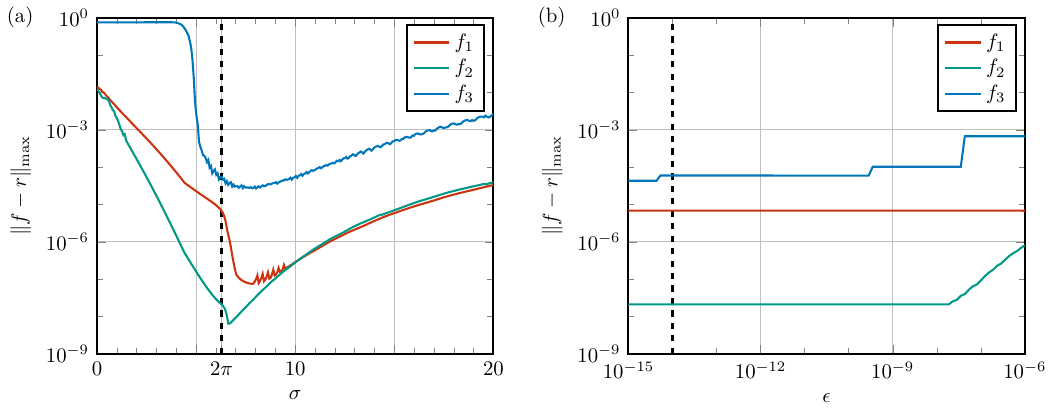}
  \end{overpic}
  \caption{(a) Evolution of the approximation error with respect to the choice of the parameter $\sigma$ in \cref{eq_poles} between the rational approximant and the functions $f_1(x,y) = (x(1-x))^{1/4+y}\sqrt{y(1-y)}$ ($N_q = 40$), $f_2(x,y)=\sqrt{x+y}$ ($N_q = 40$) and $f_3(r,\theta)$ defined by~\cref{eq_function_disk} ($N_q = 80$). The vertical dashed line highlights the choice of $\sigma=2\pi$. (b) Evolution of the approximation error with respect to the SVD truncation parameter $\epsilon$ in \cref{alg_TSVD}. The vertical dashed line highlights the choice of $\epsilon=10^{-14}$. The approximation accuracy is insensitive to the choice of $\epsilon$ in these examples once $\epsilon$ is smaller than the achievable error. Both figures display the maximum error on a fine independent grid, similarly distributed as the sampling grid.}
  \label{parameter_choice1}
\end{figure}

Prior research optimized the precise distribution of the preassigned poles in the univariate case when approximating branch point singularities $x^\alpha$. First, it follows from~\cite{trefethen2021clustering} that a tapered distribution, which we incorporated in~\cref{eq_poles}, increases the convergence rate. Furthermore, as can also be seen on~\cref{parameter_choice1}, optimizing the parameter $\sigma$ can have a significant effect on the approximation error. \cite{herremans2023resolution} shows that the optimal value for $\sigma$ is a function of the type of the singularity $\alpha$ and the angle (in the complex plane) between the approximation interval and the line on which the poles are clustered. The same dependency is observed for the clustering introduced in the multivariate approximation scheme provided that the dominant singular behavior close to the edge singularity behaves as $\mathcal{O}(d^\alpha)$, where $d$ is the distance to the edge. The most general setting is described in~\cite[Conj.~5.3]{herremans2023resolution}, where $\beta$ relates to the angle. For the multivariate approximation, this angle is defined in a plane perpendicular to the edge singularity. Note that the scheme in~\cref{sec_construction_tensor} uses poles that cluster parallel to the positive and negative imaginary axis, corresponding to $\beta = 1$. The parameter setting $\sigma = 2\pi$ used in this paper is therefore chosen such that it is optimal for square root singularities ($\alpha = 1/2$). This can be adapted if the type of the singularity is known. Moreover, if the singularity is located at the boundary of the domain, it is more efficient to use a single set of real poles outside the domain, corresponding to $\beta = 0$. Given more information on the analyticity of the function to be approximated in $\mathbb{C}^2$, a single set of imaginary poles might also suffice for singularities located in the interior of the domain. This relates to the principles discussed in~\cite[section 6]{costa2021aaa}.

The numerical scheme is not restricted to the approximation of singularities of type $\mathcal{O}(d^\alpha)$ but can be used to approximate more general \textit{local behavior}, such as discontinuities. To analyze and optimize $\sigma$ in this general setting, we believe it is beneficial to interpret and view the rational poles as sigmoidal radial basis functions on a logarithmic scale, a concept which was introduced in~\cite{huybrechs2023sigmoid}. As mentioned before, note that the proposed tapered distribution of the poles~\cref{eq_poles} is an optimization linked to the behavior of branch point singularities~\cite{huybrechs2023sigmoid,trefethen2021clustering}. For discontinuities, a uniform exponential distribution is observed to be more efficient.

\section{Piecewise rational approximation} \label{sec_piecewise}

The multivariate rational approximation algorithm described in \cref{sec_tensor} is only valid for product domains of the form $\Omega=I_1\times I_2$, where $I_1, I_2\subset\R$ are intervals, and for singularities of the function along straight lines aligning with the boundaries of $\Omega$. However, the scheme extends naturally to polygonal domains and functions with straight-line singularities using a domain decomposition technique. One motivation for the approximation of two-dimensional functions with singularities inside the domain arises from the approximation of Green's functions associated with one-dimensional differential operators. For example, the Green's functions associated with Sturm--Liouville operators on $I\subset\R$ are continuous but not differentiable along the diagonal $\mathcal{D}=\{(x,x),\, x\in I\}$ of the domain $\Omega=I\times I$~\cite[Sec.~15.2.5]{riley2006mathematical}. This requires an algorithm for approximating functions with singularities along the diagonal of the domain.

\begin{figure}[htbp]
  \centering
  \begin{overpic}[width=\textwidth]{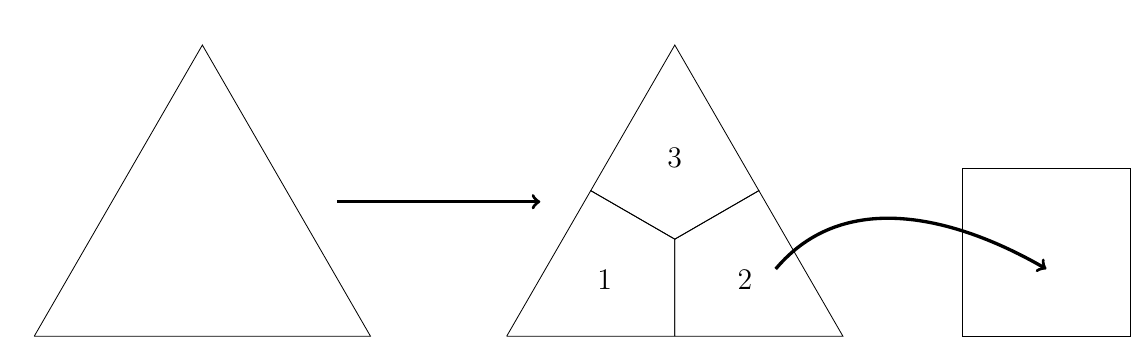}
  \end{overpic}
  \caption{Diagram illustrating the domain decomposition technique for approximating functions with singularities along straight lines. A decomposition of the domain into triangles is performed by a meshing algorithm. Then, the triangles are refined into three quadrilaterals using a barycentric refinement. Finally, each quadrilateral is mapped into a unit square reference element by an affine transformation.}
  \label{fig_diagram_decomposition}
\end{figure}

To approximate a function $f:\Omega\to \R$ with singularities located along straight lines, we decompose the domain $\Omega\subset\R^2$ into a finite number of subdomains $\Omega=\cup_{i=1}^N\Omega_i$ such that the singularities of the function are located along the boundaries of the subdomains. Here, the subdomains $\Omega_i$ are quadrilateral domains with straight-line boundaries. This can be achieved using standard meshing software such as Gmsh~\cite{geuzaine2009gmsh}, along with a barycentric refinement of the triangles into quadrilaterals (see~\cref{fig_diagram_decomposition}). Each quadrilateral composing the mesh can then be mapped into a unit square reference element by a rational transformation, for which in our implementation we chose to use the ultraSEM software system~\cite{fortunato2021ultraspherical}. Finally, we approximate the function $f$ independently on each subdomain $\Omega_i$ as
\[
  f(x,y) \approx \sum_{i=1}^N r_i(x,y)\mathds{1}_{\Omega_i}(x,y),\quad (x,y)\in\Omega,
\]
where $\mathds{1}_{\Omega_i}$ denotes the characteristic function of the domain $\Omega_i$. Here, we perform the rational approximations directly on the reference element using the tensor-product multivariate rational approximation algorithm described in \cref{sec_tensor}, and map the resulting rational functions back to the domains $\Omega_i$. The transformation to the reference element ensures that the singularities of the function $f$ are located along the boundaries of the unit square after the mapping. Hence, one can compute a rational approximation to $f$ on a domain $\Omega_i$ using the efficient tensor-product algorithm (see~\cref{sec_tensor}). As long as the number of patches is not too large, the scheme remains efficient overall.

\begin{figure}[htbp]
  \centering
  \begin{overpic}[width=\textwidth]{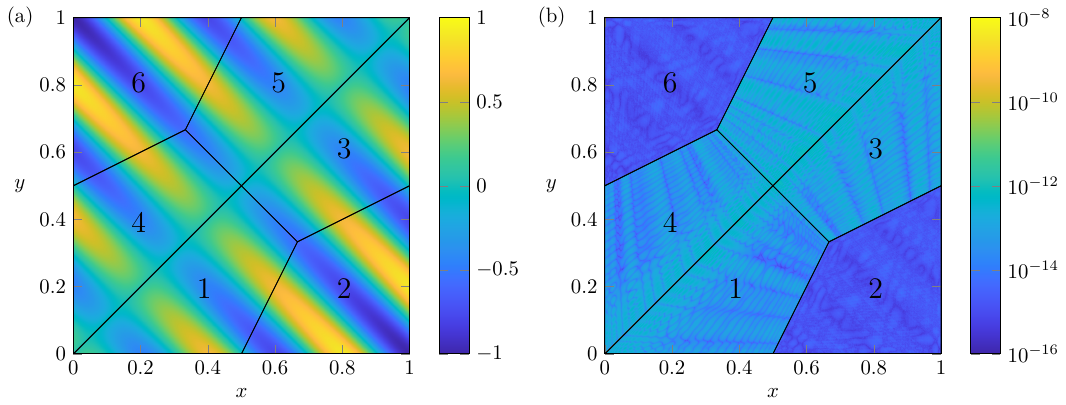}
  \end{overpic}
  \caption{(a) Piecewise rational approximation to the function $f$ defined on the unit square by \cref{eq_piecewise_f} with singularity located along the diagonal. The domain $\Omega=[0,1]^2$ is divided into six quadrilaterals so that one can efficiently perform a tensor-product rational approximation of the function on each subdomain. (b) Approximation error between the function $f$ and its piecewise rational approximant on each subdomain of the partition.}
  \label{fig_piecewise}
\end{figure}

As a numerical example, we consider the piecewise rational approximation of the function $f$ defined on $[0,1]^2$, with a singularity along the diagonal $x=y$, as
\begin{equation} \label{eq_piecewise_f}
  f(x,y) = \cos(5\pi(x+y))\sqrt{|x-y|},\quad (x,y)\in [0,1]^2.
\end{equation}
We approximate $f$ using a piecewise rational function consisting of a tensor-product rational function with poles clustering towards the diagonal $x=y$ (see \cref{fig_piecewise}). We use the method described in \cref{sec_construction_tensor} with rational degree $N_q=150$ and polynomial degree $N_p=25$ on the domains $\Omega_1, \Omega_3,\Omega_4,\Omega_5$ (see~\cref{fig_piecewise}(left)), while we perform a polynomial approximation of degree $N_p=25$ on $\Omega_2$ and $\Omega_6$ since $f$ is smooth on these domains. We plot the function along with the approximation error $|f(x,y)-r(x,y)|$ in \cref{fig_piecewise} and observe a maximum error of $1.4\times 10^{-8}$ when evaluating the rational approximant on a $500\times 500$ equispaced grid on each subdomain.

\section{Approximation of functions with curved singularities} \label{sec_curved}

Thus far, the constructions in this paper have explored rational approximations in tensor-product form. In the current section, we explore more general bivariate rational approximations. Indeed, when the function $f$ has singularities located along curved lines inside the domain $\Omega$, the piecewise rational tensor-product approximation scheme introduced in \cref{sec_piecewise} is no longer applicable as it would require a very fine refinement of the domain to approximate the singularity curve. Additionally, some applications may require a global rational approximant rather than a piecewise rational expression as in \cref{sec_piecewise}. In this section, we present a method for constructing a multivariate rational approximant to a broad class of functions with singularities along curves. For rational approximations involving such functions, we switch from the efficient tensor-product algorithm (\cref{alg_TSVD}) to a direct solver for a large discrete least squares system.

\subsection{Methodology} \label{sec_general}

Two general considerations affect the choice of a suitable approximation space. First, the location of the curve influences the denominator of the rational approximations. Here, the question arises of how to achieve clustering along curves. Second, the function to be approximated may also exhibit variation along the curve, and this leads to the question of how to represent varying residues in a numerically stable way. We show how to address both of these questions using a number of examples.

We consider functions with singularity located along curves which are precisely the zero-level curves of a polynomial $Q(x,y)$. For example, a diagonal line corresponds to $Q(x,y)=x-y$ and the unit circle to $Q(x,y)=x^2+y^2-1$. We emphasize that this definition supports multiple singularity curves in the domain through partial fraction representation of $1/Q(x,y)$, such as $Q(x,y)=(x^2+y^2-1)(x^2+y^2-(3/4)^2)$ for the function defined in \cref{eq_function_disk} and plotted in \cref{fig_disk}(a), which is singular at the circles of radius $r=1$ and $r=3/4$, as well as self-intersecting curves like $Q(x,y) = (x-1/2)(y-1/2)$. On the other hand, curves that are not the zero-level of a polynomial would still have to be approximated.

In the most general formulation, we consider a function $f:\Omega=I_1\times I_2\subset \R^2\to \R$ with polynomial singularity curve located at the roots of a polynomial $Q(x,y)$, and its approximation by a rational function of the form
\begin{equation} \label{eq_general}
  r(x,y) = \sum_{j=1}^{N_q}\sum_{0\leq k,\ell\leq N_p} a_{jkl}\frac{p_j P_k(x)P_\ell(y)}{Q(x,y)-p_j}+\sum_{0\leq k,\ell\leq N_s} b_{kl}P_k(x)P_\ell(y), \quad (x,y)\in \Omega,
\end{equation}
where the sequence $\{p_j\}_{j=1}^{N_q}$ clusters exponentially towards zero along the imaginary axis as in \cref{eq_poles}, and the polynomials $\{P_j\}_j$ are chosen to be Chebyshev polynomials (scaled to the intervals $I_1$ and $I_2$). The denominators $Q(x,y)-p_j$ may be thought of as clustering curves, in the direction normal to the singularity curve. In the four-dimensional space that arises for complex $x$ and $y$, these are actually two-dimensional manifolds. The possibility of clustering complex manifolds this way towards a curve in the real plane has been a motivating observation for this paper.

Intuitively, the corresponding residues would vary in the tangential direction, i.e., along the curves. Yet, such behavior is harder to express in general form by means of a polynomial basis for general curves. The form above resorts to a complete bivariate polynomial in tensor-product form in the numerator. One can expect this representation to be highly redundant, but we note that more efficient representations can be found in special cases. We include an example later on.

\begin{figure}[tbp]
  \centering
  \begin{overpic}[width=0.9\textwidth]{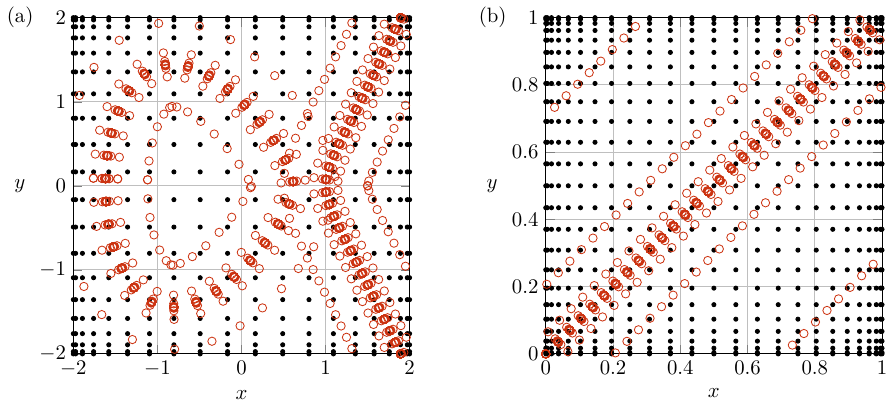}
  \end{overpic}
  \caption{Sample points used for the numerical examples in \cref{sec_elliptic,sec_green} (a)-(b), which consist of a Chebyshev grid (black dots) and points clustering exponentially fast towards the singularity curve in the normal direction (red dots).}
  \label{fig_sample_pts_curve}
\end{figure}

We select a set of sample points $\{x_i,y_i\}_{i=1}^{M}$ as the superposition of Chebyshev points and of points clustering towards the roots of $Q(x,y)$ in the domain $\Omega$, in a similar manner as \cref{sec_tensor}. Then, we solve a large least-square system of the form
\begin{equation} \label{eq_large_ls}
  \min_{\mathbf{c} \in \R^N} \|A \mathbf{c} - \mathbf{f}\|_{\textup{F}},
\end{equation}
where the matrix $A\in \R^{N\times M}$ consists of the basis of functions (rationals and polynomials) evaluated at the sample points, $\mathbf{f}=\begin{bmatrix}
    f(x_1,y_1) & \ldots & f(x_M,y_M)
  \end{bmatrix}^\top$ is the vector of sample values, and $\mathbf{c} \in \R^{N}$ is a vector containing the $N=N_q(N_p+1)^2+(N_s+1)^2$ coefficients, $\{a_{jkl}\}$ and $\{b_{kl}\}$, of $r$.

\subsection{Singularity along an elliptic curve} \label{sec_elliptic}

In this numerical example, we consider the multivariate rational approximation of the function $f$ on the domain $\Omega=[-2,2]^2$, defined as
\[
  f(x,y) = |x^3-2x+1-y^2|,\quad (x,y)\in [-2,2]^2.
\]
Following the domain of smoothness of the absolute value function, $f$ is continuous everywhere but not differentiable at the points $(x,y)\in[-2,2]^2$ where $f$ vanishes, \emph{i.e.}, along the elliptic curve defined by the algebraic equation
\[Q(x,y) = x^3-2x+1-y^2 =0,\]
whose real graph has two disconnected components since the curve is non-singular~\cite{silverman2009arithmetic}. Here, we select the rational, polynomial, and smooth residue degrees in \cref{eq_general} as $N_q=50$, $N_p=60$, $N_s=3$, respectively.

\begin{figure}[htbp]
  \centering
  \begin{overpic}[width=\textwidth]{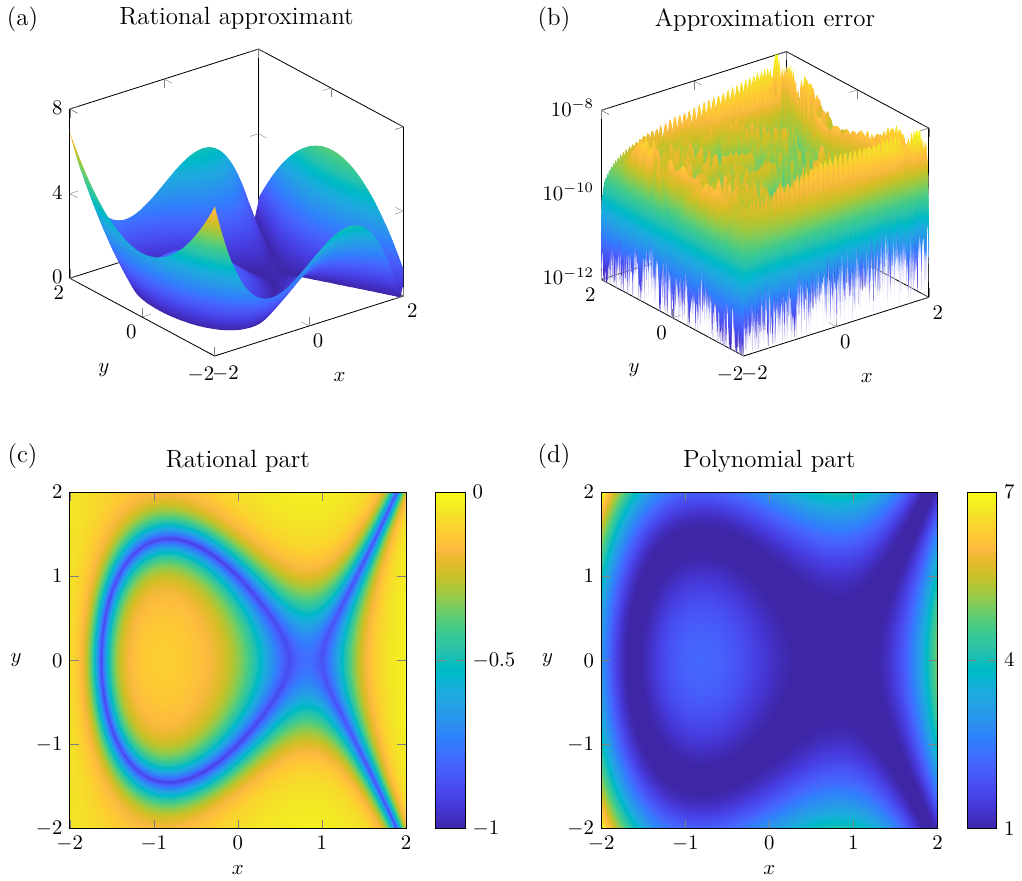}
  \end{overpic}
  \caption{(a) Multivariate rational approximant $r(x,y)$ to the function $f(x,y) = |x^3-2x+1-y^2|$, which is not differentiable along an elliptic curve, together with the approximation error $|f(x,y)-r(x,y)|$ (b). (c)-(d) Rational and smooth residue parts of the multivariate approximation.}
  \label{fig_elliptic}
\end{figure}

Following \cref{sec_general}, we evaluate the function $f$ at a grid of $M_s\times M_s$ Chebsyhev points on $[-2,2]^2$, where $M_s = 2 N_p=120$. Second, we construct a set of points that cluster towards the singularity curve defined by $Q(x,y)=0$. To do so, in our implementation, we first represent $Q$ by a chebfun2 object in the Chebfun software system~\cite{driscoll2014chebfun,townsend2013extension}. Then, we obtain a complex parameterization of the two disconnected components of the curve as $\gamma_1,\gamma_2:[-1,1]\to \mathbb{C}$ using Chebfun's rootfinding algorithm. This numerical method consists of interpolating the zero-level curves of $Q$ by using MATLAB's \texttt{contourc} command. For each disconnected component $i\in\{1,2\}$, we sample the corresponding parameterization $\gamma_i$ at $M_p=20$ uniform points $t_1,\ldots,t_{M_p}\in [-1,1]$ to obtain a set of points $\{(\Re(\gamma_i(t_j)),\Im(\gamma_i(t_j)))\}_{j=1}^{M_p}$ located along the disconnected component. For each of these points, we sample $f$ at $M_q=2 N_q$ points clustering exponentially close to the curve in each of the normal directions. We display in \cref{fig_sample_pts_curve}(a) the sample points used to evaluate the function $f$ and observe the clustering of points colored in red towards the elliptic curve.

Finally, we solve the resulting least-square system~\cref{eq_large_ls} using MATLAB's backslash command and obtain the coefficients of the rational approximant $r$ to $f$. We then evaluate the approximation error $|f(x,y)-r(x,y)|$ on a $1000\times 1000$ equispaced grid on $[-2,2]^2$, and obtain a maximum error of $1.9\times 10^{-8}$. We display in \cref{fig_elliptic} the rational approximant $r$ to $f$ (a) along with the approximation error $|f(x,y)-r(x,y)|$ (b). Moreover, we plot the rational and smooth residue parts of $r$, corresponding respectively to the two terms in~\cref{eq_general}, in panels (c) and (d).

\subsection{Diagonal singularity of a Green's function} \label{sec_green}

As noted in \cref{sec_general} a diagonal singularity curve corresponds to $Q(x,y)=x-y$. In this case, variations in the tangential direction can be expressed as a polynomial of $x+y$, since $x+y$ is constant in the normal direction. Thus, we can specialize the general form~\eqref{eq_general} to the more compact representation
\begin{equation} \label{eq_general_diag}
  r(x,y) = \sum_{j=1}^{N_q}\sum_{0\leq k \leq N_p} a_{jk}\frac{p_j P_k(x+y)}{x-y-p_j}+\sum_{0\leq k,\ell\leq N_s} b_{kl}P_k(x)P_\ell(y), \quad (x,y)\in \Omega.
\end{equation}
Note the use of $a_{jk}$ in this formula compared to $a_{jkl}$ in~\eqref{eq_general}. Unfortunately, it is not the case in general that constant functions in the direction normal to the level curve of a polynomial are themselves polynomials.

\begin{figure}[htbp]
  \centering
  \begin{overpic}[width=\textwidth]{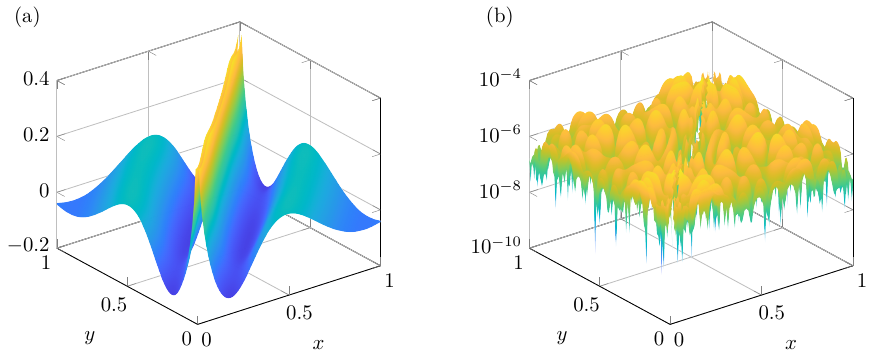}
  \end{overpic}
  \caption{Approximation of the Green's function $G(\mx,\my)$ of the gravity Helmholtz equation (as defined in~\cite{barnett2015high}). The function is singular on the diagonal and has wavelike behavior in the tangential and normal directions. The left panel shows the function for $\mx$ and $\my$ varying along a semi-circle, leading to a bivariate function on the square $[0,1]^2$. The right panel shows uniformly high accuracy of the approximation in a dense grid of points, including points close to but excluding the diagonal.}
  \label{fig_green}
\end{figure}

We illustrate the method with a non-trivial example. The Green's function of an elliptic PDE is a bivariate function $G(\mx,\my)$, which is typically singular along the diagonal $\mx=\my$. This function is known analytically for a number of common cases, such as the Laplace and Helmholtz equations, but it has to be computed numerically for differential operators with varying coefficients. In applications such as boundary element methods for integral equations, it is beneficial to have an efficient representation of the Green's function as it has to be evaluated a large number of times.

Here, we construct a multivariate rational approximation to the Green's function of the so-called \emph{gravity Helmholtz equation} which describes the propagation of waves in a 2D stratified medium~\cite{barnett2015high,slevinsky2017fast}. The function has the form~\cite[(11)]{barnett2015high}
\[
  G(\mx,\my) = A(\mx,\my) \log \frac{1}{| \mx-\my |} + B(\mx, \my).
\]
Mimicking its appearance in a boundary element method, we consider this function with $\mx, \my \in \R^2$ varying along the boundary of a domain. Thus, we consider $F(x,y) = G(\kappa(x),\kappa(y))$, in which we have chosen $\kappa$ as the parameterization of a semi-circle, with so-called energy level $E=15$ in the definition of the problem. In this experiment we choose $N_q=25$, $N_p=5$ and $N_s=15$, with sampling parameters $M_q = 2 N_q$, $M_p=2N_p$ and $M_s=2N_s$. \cref{fig_green} illustrates the approximation of the Green's function along with the approximation error on the domain. Here, we obtain a maximum error of $6\times 10^{-6}$ on a $1000\times 1000$ uniform grid.

\section{Concluding remarks}
\label{sec:conclusions}

In this paper, we explored the construction of bivariate rational approximations with fixed, well-chosen poles. When a function has singularities along straight lines, we proposed an efficient tensor-product scheme that converges to nearly machine precision accuracy at a root-exponential rate, as shown by convergence analysis and numerical experiments. The method is based on a least-squares formulation, which can be solved using effective and provably efficient regularization techniques, and generalizes naturally to piecewise rational approximations using domain decompositions. We then presented a global approximation scheme for functions with singularities along curved lines, which is demonstrated in a number of examples, including the approximation of the Green's function associated with the gravity Helmholtz equation, which is singular along the diagonal. An extension to three-dimensional functions is computationally expensive but straightforward.

Yet, there are several other directions for further research. One open problem is a more efficient solver for the large least squares problems arising in the approximation problems of \cref{sec_curved}. A second problem is the compact polynomial representation of tangentially varying residues in that setting. Finally, arguably the largest open challenge is the development of a non-linear adaptive approximation scheme to detect singularities based on function samples, without fixing poles a priori. Hence, it remains to explore methods that can achieve in multiple dimensions what AAA achieves in the univariate setting.

\section*{Code availability}
The code needed to reproduce the numerical examples in this paper is available on GitHub at \url{https://github.com/NBoulle/MultivariateRational}.

\section*{Acknowledgments}
The authors wish to thank Arne Bouillon for the feedback on the manuscript.

\appendix
\section{Separable truncated SVD algorithm} \label{sec_appendix}

An alternative and more straightforward approach to solve the least-square system~\cref{eq_least_square} compared to \cref{alg_TSVD} is to compute the truncated singular value decomposition (SVD) of the matrices $A$ and $B$ separately with a relative truncation parameter $\epsilon$ (see~\cref{alg_TSVD_sep}). This strategy is equivalent to keeping the following singular values of the Kronecker product matrix $B\otimes A$:
\[
  \{\sigma_k(A)\sigma_\ell(B)\mid \sigma_k(A)\geq \epsilon\sigma_1(A)\text{ and }\sigma_\ell(B)\geq \epsilon \sigma_1(B)\},
\]
and preserving some singular values of order $\mathcal{O}(\epsilon^2)$ due to cross-products. However, this leads to a less stable numerical solver for small values of the threshold parameter $\epsilon$, in particular, smaller than or close to the square root of machine precision.

\begin{algorithm}
  \caption{Separable truncated SVD solver}
  \label{alg_TSVD_sep}
  \begin{algorithmic}[1]
    \Require $A \in \mathbb{C}^{M_A \times N_A}$, $B \in \mathbb{C}^{M_B \times N_B}$, $F \in \mathbb{C}^{M_A \times M_B}$, relative tolerance $\epsilon > 0$
    \Ensure $C \in \mathbb{C}^{N_A \times N_B}$ such that $ACB^\top \approx F$
    \State Compute the SVD of $A$ and $B$, $A = U_A\Sigma_A V_A^*$ and $B = U_B\Sigma_B V_B^*$, where
    \begin{align*}
      U_A \Sigma_A V_A^* & = \begin{bmatrix} U_{A1} & U_{A2} \end{bmatrix} \begin{bmatrix} \Sigma_{A1} & \\ & \Sigma_{A2} \end{bmatrix} \begin{bmatrix} V_{A1} & V_{A2} \end{bmatrix}^*, \\
      U_B \Sigma_B V_B^* & = \begin{bmatrix} U_{B1} & U_{B2} \end{bmatrix} \begin{bmatrix} \Sigma_{B1} & \\ & \Sigma_{B2} \end{bmatrix} \begin{bmatrix} V_{B1} & V_{B2} \end{bmatrix}^*,
    \end{align*}
    with $0 \leq \Sigma_{A2} < \epsilon \norm{A}_2 I \leq \Sigma_{A1}$ and $0 \leq \Sigma_{B2} < \epsilon \norm{B}_2 I \leq \Sigma_{B1}$.
    \State Return $C = V_{A1} (\Sigma_{A1}^{-1}(U_{A1}^*F U_{B1}^{* \top}) \Sigma_{B1}^{-1}) V_{B1}^\top$.
  \end{algorithmic}
\end{algorithm}

Nonetheless, this approach remains theoretically rigorous. Hence, analogously to the bounds introduced in \cite{adcock2019frames,adcock2020fna2,coppe2020az}, the following theorem shows that the residual of the solution $C$ returned by~\cref{alg_TSVD_sep} is of order $\epsilon$ for functions $f$ that can be approximated to within $\epsilon$ by a tensor-product approximant, i.e., there exists a bounded coefficient matrix $X \in \mathbb{C}^{N_A \times N_B}$, satisfying $\norm{X}_{\textup{F}} \approx 1$, such that $\norm{F-AXB^\top}_{\textup{F}} \approx \epsilon$.

\begin{theorem}\label{th:tsvd_sep}
  Let $A \in \mathbb{C}^{M_A \times N_A}$, $B \in \mathbb{C}^{M_B \times N_B}$, $F \in \mathbb{C}^{M_A \times M_B}$, and choose a relative threshold parameter of $0<\epsilon < 1$. Let $C\in \mathbb{C}^{N_A\times N_B}$ be the coefficient matrix computed by Algorithm~\ref{alg_TSVD_sep}. Then,
  \[
    \norm{F - ACB^\top}_{\textup{F}} \leq \inf_{X \in \mathbb{C}^{N_A \times N_B}} \left\{ 2 \norm{F-AXB^\top}_{\textup{F}} + \gamma_{A,B}^\epsilon \; \epsilon \norm{X}_{\textup{F}} \right\},
  \]
  where $\gamma_{A,B}^\epsilon \coloneqq (2 + \epsilon) \norm{A}_2 \norm{B}_2$.
\end{theorem}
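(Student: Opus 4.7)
The plan is to mirror the proof of \cref{th:tsvd}: first put $ACB^\top$ in a clean closed form involving orthogonal projections onto the retained singular subspaces of $A$ and $B$, and then compare it to an arbitrary ansatz $AXB^\top$ via a telescoping decomposition plus submultiplicative estimates controlled by the truncation rule.

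\textbf{Step 1 (closed form).} I would substitute the formula for $C$ in \cref{alg_TSVD_sep} directly and use the identities $A V_{A1}\Sigma_{A1}^{-1} = U_{A1}$ together with $\Sigma_{B1}^{-1} V_{B1}^\top B^\top = U_{B1}^\top$ (the second identity follows from $V_{B1}^\top \overline{V_B} = \overline{V_{B1}^* V_B} = \begin{bmatrix} I & 0 \end{bmatrix}$ and the block structure of $\Sigma_B$) to obtain
\[
ACB^\top = \Pi_A\, F\, \overline{\Pi_B}, \qquad \Pi_A := U_{A1}U_{A1}^*, \quad \Pi_B := U_{B1}U_{B1}^*.
\]
The complex conjugate arises because the algorithm uses $B^\top$ rather than $B^*$; nevertheless $\overline{\Pi_B}$ is the orthogonal projection in $\C^{M_B}$ onto the range of $\overline{U_{B1}}$, so its spectral norm is one.

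\textbf{Step 2 (telescoping and estimates).} For any $X \in \C^{N_A \times N_B}$, set $A_\epsilon := U_{A1}\Sigma_{A1}V_{A1}^*$ and $B_\epsilon := U_{B1}\Sigma_{B1}V_{B1}^*$. A short computation using the closed form above gives $\Pi_A(AXB^\top)\overline{\Pi_B} = A_\epsilon X B_\epsilon^\top$, which yields the decomposition
\[
F - ACB^\top = \bigl[(F - AXB^\top) - \Pi_A(F - AXB^\top)\overline{\Pi_B}\bigr] + \bigl[AXB^\top - A_\epsilon X B_\epsilon^\top\bigr].
\]
The first bracket has Frobenius norm at most $2\norm{F - AXB^\top}_\F$ since both $\Pi_A$ and $\overline{\Pi_B}$ have spectral norm one. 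For the second bracket, the split $AXB^\top - A_\epsilon X B_\epsilon^\top = (A-A_\epsilon)X B^\top + A_\epsilon X (B-B_\epsilon)^\top$, combined with the truncation bounds $\norm{A-A_\epsilon}_2 \leq \epsilon\norm{A}_2$ and $\norm{B-B_\epsilon}_2 \leq \epsilon\norm{B}_2$ coming from the rule $\Sigma_{A2} < \epsilon\norm{A}_2 I$ and $\Sigma_{B2} < \epsilon\norm{B}_2 I$, together with the crude estimate $\norm{A_\epsilon}_2 \leq \norm{A}_2 + \norm{A-A_\epsilon}_2 \leq (1+\epsilon)\norm{A}_2$, gives a contribution of $(2+\epsilon)\epsilon\norm{A}_2\norm{B}_2\norm{X}_\F = \gamma_{A,B}^\epsilon\,\epsilon\,\norm{X}_\F$. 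Taking the infimum over $X$ finishes the proof.

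The main obstacle, to the extent there is one, is the bookkeeping around the complex conjugates and transposes in Step 1, which is forced by the appearance of $B^\top$ rather than $B^*$ (this is itself dictated by the vectorization identity $\text{vec}(AXB^\top) = (B\otimes A)\text{vec}(X)$). Once one recognizes $\overline{\Pi_B}$ as an orthogonal projection, the remaining estimates are routine triangle-inequality bookkeeping of the same flavor as \cite[Lem.~3.3]{coppe2020az} used in the proof of \cref{th:tsvd}; the only genuine difference with that theorem is the independent truncation of $A$ and $B$, which is precisely what produces the extra $\epsilon$ inside the factor $\gamma_{A,B}^\epsilon$.
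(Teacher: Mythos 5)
Your proof is correct and follows essentially the same route as the paper's: reduce $ACB^\top$ to $\Pi_A F\,\overline{\Pi_B}$ via the block SVD identities, insert an arbitrary $AXB^\top$ by adding and subtracting, bound the projected-difference term by $2\norm{F-AXB^\top}_{\textup{F}}$, and bound the remainder via the truncation rule. The only (cosmetic) divergence is in the remainder $AXB^\top - A_\epsilon X B_\epsilon^\top$: the paper expands it into the three cross-terms of the block SVD (contributing $\epsilon+\epsilon+\epsilon^2$), whereas you use the two-term telescope $(A-A_\epsilon)XB^\top + A_\epsilon X(B-B_\epsilon)^\top$ and then pick the deliberately loose bound $\norm{A_\epsilon}_2 \le (1+\epsilon)\norm{A}_2$ to reproduce the stated $\gamma_{A,B}^\epsilon=(2+\epsilon)\norm{A}_2\norm{B}_2$; with the tight identity $\norm{A_\epsilon}_2=\norm{A}_2$ your decomposition would in fact yield the slightly sharper constant $2\norm{A}_2\norm{B}_2$.
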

\begin{proof}
  We substitute $$C = V_{A1} (\Sigma_{A1}^{-1}(U_{A1}^*F U_{B1}^{* \top}) \Sigma_{B1}^{-1}) V_{B1}^\top = (V_{A1}\Sigma_{A1}^{-1}U_{A1}^*)F (V_{B1}\Sigma_{B1}^{-1}U_{B1}^*)^\top$$ into the residual of \cref{eq_least_square} to obtain
  \[
    \norm{F-ACB^\top}_{\textup{F}} = F - A(V_{A1}\Sigma_{A1}^{-1}U_{A1}^*)F (V_{B1}\Sigma_{B1}^{-1}U_{B1}^*)^\top B^\top.
  \]
  One can expand the block form of the SVD of $A$ into $A = U_{A1}\Sigma_{A1}V_{A1}^* + U_{A2}\Sigma_{A2}V_{A2}^*$ and $B$ into $B = U_{B1}\Sigma_{B1}V_{B1}^* + U_{B2}\Sigma_{B2}V_{B2}^*$, with $V_{A1}^*V_{A2} = 0$ and $V_{B1}^*V_{B2} = 0$. Therefore,
  \begin{align*}
    AV_{A1}\Sigma_{A1}^{-1}U_{A1}^* & = U_{A1}\Sigma_{A1}V_{A1}^*V_{A1}\Sigma_{A1}^{-1}U_{A1}^* + U_{A2}\Sigma_{A2}V_{A2}^*V_{A1}\Sigma_{A1}^{-1}U_{A1}^* \\
                                    & = U_{A1}\Sigma_{A1}\Sigma_{A1}^{-1}U_{A1}^* = U_{A1}U_{A1}^*
  \end{align*}
  and similarly
  \begin{align*}
    (V_{B1}\Sigma_{B1}^{-1}U_{B1}^*)^\top B^\top = (B V_{B1}\Sigma_{B1}^{-1}U_{B1}^*)^\top = (U_{B1}U_{B1}^*)^\top.
  \end{align*}
  For any $X \in \mathbb{C}^{N_A \times N_B}$, we can add and subtract $AXB^\top - U_{A1}U_{A1}^* (AXB^\top) (U_{B1} U_{B1}^*)^\top$ to get
  \begin{align*}
    F-ACB^\top = & \underbrace{(F - AXB^\top) - U_{A1}U_{A1}^* (F - AXB^\top) (U_{B1} U_{B1}^*)^\top}_{1} \\ & +\underbrace{AXB^\top - U_{A1}U_{A1}^* (AXB^\top) (U_{B1} U_{B1}^*)^\top}_{2}.
  \end{align*}
  Using the standard inequalities $\norm{AB}_{\textup{F}} \leq \norm{A}_2 \norm{B}_{\textup{F}}$ and  $\norm{AB}_{\textup{F}} \leq \norm{A}_{\textup{F}} \norm{B}_2$~\cite[\S50.3]{hogben2006handbook}, the Frobenius norm of the first term can be bounded by
  \begin{align*}
     & \norm{(F - AXB^\top) - U_{A1}U_{A1}^* (F - AXB^\top) (U_{B1} U_{B1}^*)^\top}_{\textup{F}}
    \\ &\quad\leq (1 + \norm{U_{A1} U_{A1}^*}_2 \norm{(U_{B1} U_{B1}^*)^\top}_2) \norm{F - AXB^\top}_{\textup{F}} \\ &\quad\leq 2 \norm{F - AXB^\top}_{\textup{F}}.
  \end{align*}
  Again using the expansion of the block form of the SVD, the second term becomes
  \begin{align*}
     & AXB^\top - U_{A1}U_{A1}^* (AXB^\top) (U_{B1} U_{B1}^*)^\top                                                                  \\
     & \quad = (U_{A1}\Sigma_{A1}V_{A1}^* + U_{A2}\Sigma_{A2}V_{A2}^*)X(U_{B1}\Sigma_{B1}V_{B1}^* + U_{B2}\Sigma_{B2}V_{B2}^*)^\top \\ &\qquad - (U_{A1}\Sigma_{A1}V_{A1}^*)X(U_{B1}\Sigma_{B1}V_{B1}^*)^\top
    \\ &\quad = (U_{A1}\Sigma_{A1}V_{A1}^*)X(U_{B2}\Sigma_{B2}V_{B2}^*)^\top + (U_{A2}\Sigma_{A2}V_{A2}^*)X(U_{B1}\Sigma_{B1}V_{B1}^*)^\top \\ &\qquad + (U_{A2}\Sigma_{A2}V_{A2}^*)X(U_{B2}\Sigma_{B2}V_{B2}^*)^\top.
  \end{align*}
  Therefore,
  \begin{align*}
     & \norm{AXB^\top - U_{A1}U_{A1}^* (AXB^\top) (U_{B1} U_{B1}^*)^\top}_{\textup{F}} \\ &\quad\leq (\norm{\Sigma_{A1}}_2\norm{\Sigma_{B2}}_2 + \norm{\Sigma_{A2}}_2\norm{\Sigma_{B1}}_2 + \norm{\Sigma_{A2}}_2\norm{\Sigma_{B2}}_2) \norm{X}_{\textup{F}}.
  \end{align*}
  The final bound follows from
  \[
    \norm{\Sigma_{A1}}_2 \leq \norm{A}_2, \quad \norm{\Sigma_{A2}}_2 \leq \epsilon \norm{A}_2, \quad \norm{\Sigma_{B1}}_{\textup{F}} \leq \norm{B}_2, \quad \norm{\Sigma_{B2}}_{\textup{F}} \leq \epsilon \norm{B}_2.
  \]
\end{proof}

\begin{remark}
  Algorithm~\ref{alg_TSVD_sep} and Theorem~\ref{th:tsvd_sep} can also be formulated for 3D or higher-dimensional approximations with only minor changes. The rationale is that the regularization is applied in each dimension independently.
\end{remark}

\section{Convergence of the univariate lightning approximation} \label{sec_appendix2}

The following two lemmas correspond to Theorem 2.2 and Theorem 2.3 of \cite{gopal2019solving}. The first lemma is a local statement about approximation near a singularity, while the second one describes approximation on $[0,1]$. The results were not explicitly stated in \cite{gopal2019solving} as we formulate them here, hence we include short proofs. The first lemma specifies an a priori choice of poles in the unit disk $D(0,1)$ on the interval $[-1,0]$.

\begin{lemma} \label{lem:gopal2019solving_lemma1}
  Let $f$ satisfy the conditions of \cite[Thm.~2.2]{gopal2019solving}, and define all symbols as in that theorem, in particular the poles $p_j = e^{-\sigma j/\sqrt{n}}$ for some $\sigma > 0$ and $0 \leq j \leq n-1$. Assume in addition that $|f(z)| \leq D |z|^\delta$ with $D>0$ and $z$ in a disk with radius $b$ around the origin. There exists an $n_0 > 0$ sufficiently large, depending on $b>0$ but not on $f$, and a type $(n-1,n)$ rational function $r_n$ such that for $z \in [0,\rho]$
  \[
    |f(z) - r_n(z)| \leq A_0 \, D \, \Vert f \Vert_{\infty,D(0,1)} \, e^{-C \sqrt{n}}, \quad n > n_0,
  \]
  where $A_0$ is independent of $f$. Here, the constant $C$ depends only on $\delta$ and on $\sigma$, while $\rho$ is sufficiently small and satisfies $0<\rho<1$ and depends only on $\sigma$.
\end{lemma}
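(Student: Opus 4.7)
The plan is to reduce the lemma to the setting of Gopal and Trefethen's Theorem 2.2 and to track all constants carefully. I would first invoke their construction of a type $(n-1,n)$ rational interpolant $r_n$ that interpolates $f$ at $n$ Fej\'er-like nodes chosen in the slit disk $D(0,1)\setminus[-1,0]$, so that together with the prescribed poles $\{p_j\}$ the classical Hermite contour-integral representation
\[ f(z)-r_n(z) \;=\; \frac{1}{2\pi i}\oint_\Gamma \frac{N(z)}{N(\zeta)}\,\frac{f(\zeta)}{\zeta-z}\,\d\zeta \]
holds on a suitable contour $\Gamma$ encircling $[0,\rho]$ and lying in the domain of analyticity of $f$. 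Here $N$ is a Blaschke-type product built from the poles and interpolation nodes, which governs the rate of convergence.

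The core analytic step is the estimate of $|N(z)/N(\zeta)|$. Using the standard conformal map of the slit disk to an annulus, the exponential pole spacing $p_j = e^{-\sigma j/\sqrt{n}}$ translates into a uniformly spaced pattern whose logarithmic potential is explicitly computable. On the contour $\Gamma$ this yields an inequality of the form $|N(z)/N(\zeta)| \leq A_0' \, e^{-C\sqrt{n}}$, uniform for $z\in[0,\rho]$ and $\zeta\in\Gamma$, provided $\rho$ is chosen small enough relative to $\sigma$ that $z$ lies strictly on the near-singularity side of the potential-theoretic balance point. This simultaneously fixes the admissible $\rho$ in terms of $\sigma$ and produces the constant $C=C(\sigma,\delta)$.

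To pull out the explicit prefactor $D\,\|f\|_{\infty,D(0,1)}$, I would split $\Gamma$ into two pieces: a short circular arc of radius comparable to $b$ around the origin, on which the growth hypothesis $|f(\zeta)|\leq D|\zeta|^\delta$ applies, and the remainder, on which only the global bound $|f(\zeta)|\leq \|f\|_{\infty,D(0,1)}$ is available. On the small arc the factor $|\zeta|^\delta$ ensures integrability and contributes $\mathcal{O}(D\,e^{-C\sqrt{n}})$, using that for $z\in[0,\rho]$ with $\rho<b$ the denominator $|\zeta-z|$ stays bounded away from zero; on the remainder the integrand is bounded by $\|f\|_{\infty,D(0,1)}\,e^{-C\sqrt{n}}$ times the arclength. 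Adding the two contributions yields the advertised bound with an absolute constant $A_0$, and $n_0$ is read off from the threshold at which the conformal-map estimates and the dominance of the exponential factor over the contour length both kick in.

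The principal obstacle is the potential-theoretic estimate of $|N(z)/N(\zeta)|$: converting the exponentially clustered pole distribution into a clean $e^{-C\sqrt{n}}$ bound that is simultaneously uniform in $z\in[0,\rho]$ and independent of $f$ requires a careful coordinated choice of interpolation nodes, of the contour $\Gamma$, and of the conformal coordinate near the tip of the slit; the explicit dependence of $\rho$ on $\sigma$ emerges from this geometry. Once that estimate is in place, the rest is a routine packaging of the Hermite formula with the split just described.
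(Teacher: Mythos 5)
Your proposal has a structural misunderstanding of what is being invoked, and a key estimate that is asserted rather than established.

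First, the construction in \cite[Thm.~2.2]{gopal2019solving} is \emph{not} a Hermite interpolant at Fej\'er-like nodes. Gopal and Trefethen build their rational approximant by writing $f$ as a Cauchy integral over a contour hugging the slit and the boundary, splitting it into a part $\Gamma_0$ along the slit near the origin and a part $\Gamma_1$ consisting of the unit circle plus the slit from $-1$ to $-\epsilon$, and then discretizing the slit integral by a quadrature rule whose nodes become the poles $p_j$. There is no interpolation condition, no node polynomial $\omega$, and hence no Blaschke-type ratio $N(z)/N(\zeta)$ in their argument. By building your proof around the Hermite/Walsh representation you are effectively proposing to reprove the theorem with a different technique, not to ``track all constants'' through the existing one.

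Second, the step you correctly identify as the crux, the uniform bound $|N(z)/N(\zeta)|\leq A_0' e^{-C\sqrt{n}}$, is precisely what you do not establish. You appeal to a conformal map of the slit disk and claim the poles $e^{-\sigma j/\sqrt{n}}$ become ``uniformly spaced,'' but note that while the spacing in the logarithmic coordinate is indeed $\sigma/\sqrt{n}$, the logarithmic range covered is $\sigma(n-1)/\sqrt{n}\sim\sigma\sqrt{n}$, which grows without bound; this is not an equilibrium-type configuration for which the potential is ``explicitly computable'' in any obvious way. Moreover the Blaschke ratio mixes poles with interpolation nodes, and you have not specified the latter concretely enough to show they cooperate with this pole distribution to produce the root-exponential rate. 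In contrast, the paper's proof does not need any of this: it simply observes that the constant $A$ in the Gopal--Trefethen error estimate arises from two H\"older inequalities, one on $[-\epsilon,0]$ where the bound $|f(z)|\le D|z|^\delta$ is used directly (giving a term proportional to $D$), and one controlling $\max_{z\in[0,\rho]}\int_{\Gamma_1}|f(t)/(t-z)|\,\textup{d}t$, which is bounded using $D|t|^\delta$ on $|t|\le b$ and $\|f\|_{\infty,D(0,1)}$ on $|t|>b$. Your idea of splitting the contour into a small arc of radius comparable to $b$ and the remainder is the right instinct and does match the paper's argument for this second term; the issue is that you have wrapped it inside an unproven potential-theoretic framework instead of the quadrature-error framework actually used by the cited theorem.
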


\begin{proof}
  The proof of \cite[Thm.~2.2]{gopal2019solving} shows that constants $A$ and $C$ exist such that
  \[
    | f(z) - r(z)| \leq A e^{-C \sqrt{n}}, \quad n \geq 1,
  \]
  for $z$ in a wedge centered at $z=0$ with angle $\theta$ and radius $0 < \rho < 1$. For the constant $C$, we merely note that it is determined by the distribution of poles and by the value of $\delta$. The value of $\rho$ depends only on the distribution of poles, i.e., on $\sigma$. We can arbitrarily choose the angle $\theta$ of the wedge, as we are only interested in its intersection $[0,\rho]$ with the real line.

  The constant $A$ is more involved. In the cited proof, it arises from two applications of a H{\"o}lder inequality, giving rise to $A = A_1 + A_2$. The first application uses $|f(z)| = {\mathcal O}(|z|^\delta)$ on the interval $[-\epsilon,0]$, in which $-\epsilon = -e^{-\sigma(n-1)/\sqrt{n}}$ is the smallest pole. We take $n_0$ sufficiently large so that we may use the bound $|f(z)| \leq D |z|^\delta$ instead, showing that $A_1$ is proportional to $D$ but independent of $f$.

  The second application leads to $A_2$ being proportional to
  $\max_{z \in [0,\rho]} \int_{\Gamma_1} |\frac{f(t)}{t-z} |\d t$,
  in which $\Gamma_1$ is the unit circle with $-1$ connected to $-\epsilon$ both above and below the slit~\cite[Fig.~2]{gopal2019solving}. The integrand can be bounded by substituting $f$ with $ \Vert f \Vert_{\infty,D(0,1)}$, if $\lvert t \rvert > b$, and by $D \rvert t \lvert^\delta$, if $\lvert t \rvert \leq b$. The resulting bound is proportional to these constants. Note that the different bound in the disk of radius $b$ is necessary for the Cauchy integral to be bounded uniformly in $n$ since the integral is finite as $-\epsilon$ approaches zero from the left and $z$ approaches zero from the right only because $f$ vanishes at the origin.
\end{proof}

The second lemma involves a scaling with a factor $R > 1$, which in principle maps those poles to a larger interval $[-R,0]$. However, both in theory and in practice, the poles can be scaled essentially to an arbitrary interval near the singularity. On the one hand, what matters is not the precise formula for the poles but their density close to the singularity. On the other hand, rational functions with poles further away can always be absorbed into the polynomial part, hence the scaling is not critical. The rate of convergence $C$ is more sensitive to the choice of $\sigma$, which governs the density of small poles, than it is to other parameters, but convergence is root-exponential no matter how $\sigma$ and the scaling of the poles are chosen.

\begin{lemma} \label{lem:gopal2019solving_lemma2}
  Let $f$ be analytic on a complex neighborhood $\Omega$ of $[0,1]$ but excluding a slit along the negative real axis up to and including the point $z=0$. Assume that $f$ satisfies $|f(z)| \leq D |z|^\delta$ for some $\delta > 0$ and $z$ in a disk of radius $b>0$ around the origin. Then, there exist type $(n+m,n)$ rational approximations $r_{n+m,n}$ and a constant $C > 0$, independent of $f$, such that
  \[
    \Vert f - r_{n+m,n} \Vert_{\infty,[0,1]} \leq A \, D\, \Vert f \Vert_{\infty,\Omega} \, e^{-C \sqrt{n}}, \quad n > n_0,
  \]
  for some sufficiently large $n_0 > 0$. Moreover, the polynomial part can be chosen to have degree $m = E \sqrt{n}$, for a sufficiently large constant $E>0$ that also scales with $\Vert f \Vert_{\infty,\Omega}$, and the rational functions can be chosen to have simple poles at
  \begin{equation} \label{eq:scaledpoles}
    \beta_j = -R\; e^{-\sigma j / \sqrt{n}}, \quad 0 \leq j \leq n-1,
  \end{equation}
  with $\sigma > 0$ and for some sufficiently large $R > 1$ independent of $f$. The constant $A$ depends on the region of analyticity $\Omega$ and on the radius $b$ but is independent of $f$ and of $n$.
\end{lemma}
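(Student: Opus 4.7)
The plan is to reduce \cref{lem:gopal2019solving_lemma2} to \cref{lem:gopal2019solving_lemma1} by a rescaling trick, and then correct the residual on $[0,1]$ with a polynomial. First I would choose $R > 1$ large enough that the slit disk $D(0,R) \setminus (-\infty,0]$ lies inside the domain of analyticity of $f$ dictated by $\Omega$, and define $g(w) := f(Rw)$ for $w \in D(0,1)$. The hypothesis $|f(z)| \leq D|z|^\delta$ rescales to $|g(w)| \leq DR^\delta |w|^\delta$, and $\|g\|_{\infty,D(0,1)} \leq \|f\|_{\infty,\Omega}$. Applying \cref{lem:gopal2019solving_lemma1} to $g$ with parameter $\sigma$ yields a type $(n-1,n)$ rational $\tilde r_n$ with poles $p_j = -e^{-\sigma j/\sqrt n}$ such that $|g - \tilde r_n| \leq A_0 D R^\delta \|f\|_{\infty,\Omega} e^{-C\sqrt n}$ on $[0,\rho]$. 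Undoing the scaling, $r_n(z) := \tilde r_n(z/R)$ has exactly the poles $\beta_j = -R e^{-\sigma j/\sqrt n}$ required by~\eqref{eq:scaledpoles} and approximates $f$ to the same accuracy on $[0, R\rho]$.

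Next I would handle the global behavior. Define $e_n(z) := f(z) - r_n(z)$. Since all poles of $r_n$ lie in $[-R,0)$, the function $e_n$ is analytic on a Bernstein ellipse $E_\mu$ around $[0,1]$, with $\mu > 1$ chosen small enough that $E_\mu \subset \Omega$. On $E_\mu$ both $f$ and $r_n$ are controlled: $f$ trivially by $\|f\|_{\infty,\Omega}$, and $r_n$ by a bound extracted from the Cauchy-integral construction underlying \cref{lem:gopal2019solving_lemma1}, which grows at most polynomially in $n$ times $\|f\|_{\infty,\Omega}$ because the poles $\beta_j$ stay uniformly bounded away from $E_\mu$. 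Classical Bernstein/Chebyshev estimates then produce a polynomial $p_m$ of degree $m$ with
\[
\|e_n - p_m\|_{\infty,[0,1]} \leq A' \mu^{-m} \|e_n\|_{\infty, E_\mu}.
\]
Setting $r_{n+m,n} := r_n + p_m$ yields a type $(n+m,n)$ rational function. Choosing $m = E\sqrt n$ with $E$ large enough to absorb both the polynomial-in-$n$ prefactor on $\|r_n\|_{\infty,E_\mu}$ and the exponent mismatch (which is where the advertised scaling of $E$ with $\|f\|_{\infty,\Omega}$ enters) forces $\mu^{-m} = O(e^{-C'\sqrt n})$, and the two error contributions balance to give the claimed bound.

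The main obstacle is the uniform bound on $r_n$ over $E_\mu$ in the second step. A priori, a partial fraction $\sum_j s_j/(z-\beta_j)$ with $n$ terms whose residues depend delicately on $n$ need not be globally bounded; with poles accumulating at $0$, one might fear loss of control on $[R\rho,1]$ and on a complex neighborhood thereof. The argument must therefore peel open the proof of \cref{lem:gopal2019solving_lemma1} (Theorem~2.2 of~\cite{gopal2019solving}) to see that the residues $s_j$ arise from Cauchy integrals of $f$ along a contour enclosing the slit in $\Omega$, and so are tied pointwise to $\|f\|_{\infty,\Omega}$ with at worst polynomial growth in $n$ that is harmless once $m$ is taken sufficiently large. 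Once that quantitative bound is in hand, the remainder is a routine triangle inequality combined with the standard Bernstein-ellipse polynomial approximation theorem.
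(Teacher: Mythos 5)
Your plan shares the spirit of the paper's proof --- apply the local lemma after a rescaling and handle the remainder with a polynomial --- but it omits the one step that makes everything else possible, namely the Cauchy integral decomposition $f = f_0 + f_1$ over two contours $\Gamma_0$ (hugging the slit) and $\Gamma_1$ (encircling $[0,1]$). Without it, two steps of your argument break.

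First, you cannot rescale $f$ directly. You propose to choose $R>1$ so that $D(0,R)\setminus(-\infty,0]$ lies inside $\Omega$, and you also need $R\rho\geq 1$ to cover $[0,1]$. But $\Omega$ is only a neighborhood of the segment $[0,1]$ --- it may be an arbitrarily thin tube --- and a disk of radius $R\geq 1$ contains points such as $\pm iR$ that lie nowhere near $[0,1]$, hence outside $\Omega$. (Incidentally, choosing $R$ \emph{larger} only makes this worse; your wording has it backwards.) So $g(w)=f(Rw)$ is not defined on the unit slit disk and~\cref{lem:gopal2019solving_lemma1} cannot be applied to it. The paper sidesteps this precisely by applying the local lemma to $f_0 = \frac{1}{2\pi i}\int_{\Gamma_0}\frac{f(t)}{t-z}\d t$, which, unlike $f$, is analytic on all of $\mathbb{C}\setminus\Gamma_0$ and in particular on an arbitrarily large slit disk; the global analyticity that the rescaling needs is manufactured by the Cauchy integral, not assumed.

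Second, even if a suitable $r_n$ could be produced, your polynomial-correction step fails. You claim $e_n = f - r_n$ is analytic on a Bernstein ellipse $E_\mu$ around $[0,1]$. Every such ellipse ($\mu>1$) contains $z=0$ and a piece of the negative real axis in its interior, and both $f$ (branch point at $0$) and $r_n$ (simple poles at $\beta_j\to 0^-$) are singular there; these singularities do not cancel. So $e_n$ is not analytic on $E_\mu$, no polynomial error estimate of the form $\mu^{-m}\|e_n\|_{\infty,E_\mu}$ is available, and the concern you raise about bounding $r_n$ on $E_\mu$ is moot because $\|r_n\|_{\infty,E_\mu}=\infty$. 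The paper again avoids this by applying the polynomial approximation to $f_1 = \frac{1}{2\pi i}\int_{\Gamma_1}\frac{f(t)}{t-z}\d t$, which is analytic inside $\Gamma_1$ --- in particular on a Bernstein ellipse around $[0,1]$ --- by construction. The ordering matters: the decomposition must precede the approximation, so that each piece is analytic in the region where the corresponding tool (rational or polynomial) operates.

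Your closing paragraph correctly senses there is a difficulty with the global control of $r_n$, but diagnoses it as a quantitative issue about residue growth. It is more fundamental than that; the residual simply is not analytic where you need it to be. If you insert the Cauchy splitting at the start, your two steps become the two steps of the paper's proof and the rest of your reasoning (the $R^\delta$ bookkeeping for the rescaled growth bound, the choice $m = E\sqrt{n}$, the dependence of $E$ on $\|f\|_{\infty,\Omega}$) goes through essentially as written.
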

\begin{proof}
  Root-exponential convergence of rational approximations of the required form is proven in \cite[Thm.~2.3]{gopal2019solving} for $f$ analytic on a polygonal region $\Omega\subset\mathbb{C}$, with slits along the exterior bisector at corners. The current statement differs from the stated result in two ways: (i) we take $\Omega$ to be $[0,1]$ and we consider possible lack of analyticity only at the left endpoint $z=0$, and (ii) we make the dependence on $f$ more explicit. The method of proof is entirely the same as that of \cite[Theorem 2.3]{gopal2019solving} and here we focus mainly on these two differences.

  \begin{figure}
    \centering
    \includegraphics[width=.5\linewidth]{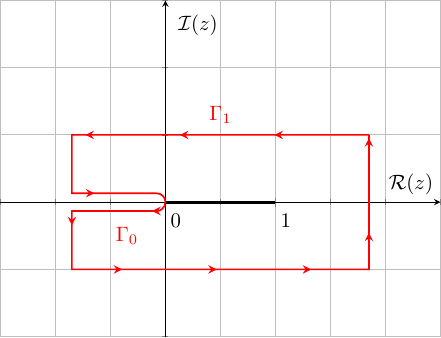}
    \caption{The approximation problem of~\cref{lem:gopal2019solving_lemma2}. The contour $\Gamma = \Gamma_0\cup\Gamma_1$ lies in the closure of the region of analyticity of $f$. }
    \label{fig:newproof}
  \end{figure}

  For the region, we substitute the contour $\Gamma$ and its constituent parts of \cite[Thm.~2.3]{gopal2019solving} by those shown in~\cref{fig:newproof}. Thus, $\Gamma_0$ is a contour from $z=-\eta$ to $z=0$ and back along both sides of the slit and the contour $\Gamma_1$ encircles $[0,1]$ with positive orientation, while both $\eta$ and the distance of $\Gamma_1$ to the real line are small enough for $\Gamma$ to remain within the region of analyticity $\Omega$ of $f$. We can write $f = f_0 + f_1$ with
  \[
    f_0 = \frac{1}{2\pi i} \int_{\Gamma_0} \frac{f(t)}{t-z}\d t \quad \mbox{and} \quad f_1 = \frac{1}{2\pi i} \int_{\Gamma_1} \frac{f(t)}{t-z}\d t.
  \]

  As in the proof of \cite[Thm.~2.3]{gopal2019solving}, the function $f_1$ is analytic away from $\Gamma_1$ in some open region containing $[0,1]$. Hence it can be approximated with exponential convergence by polynomials, with a bound satisfying~\cite[Thm.~8.2]{trefethen2019atap}
  \[
    \Vert f_1 - p_m \Vert_{\infty,[0,1]} \leq \frac{2 M \xi^{-m}}{\xi-1},
  \]
  with $\xi > 0$ independent of $f$ and with $M$ the maximum of $f_1$ along the Bernstein ellipse with radius $\xi$ (within the region of analyticity of $f_1$). This results in root-exponential convergence in $n$ of this ``smooth part'' of the problem when the polynomial degree is chosen to be $m = E \sqrt{n}$ for some $E > 0$. The factor in front depends on $M$, which can readily be bounded in terms of the maximum of $f$ in $\Omega$.

  The function $f_0$ is analytic away from $\Gamma_0$ but may be singular at the endpoint $z=-\eta$ of the contour. In the proof of \cite[Thm.~2.3]{gopal2019solving} the local result \cite[Thm.~2.2]{gopal2019solving} is invoked with a scaling factor $1/\rho$ such that the result of that theorem on $[0,\rho]$ implies the approximation of $f_0$ on $[0,1]$. The analog in the current proof is to invoke the local result of \cref{lem:gopal2019solving_lemma1} instead. However, the singularity at $z = -\eta$ is a problem for the factor $\Vert f \Vert_{\infty, D(0,1)}$ in the error bound. Hence, we scale by $R > 1/\min(\eta,\rho)$ instead, ensuring that \cref{lem:gopal2019solving_lemma1} implies approximation accuracy on an interval at least as large as $[0,1]$, while $\tilde{f}_0(z) = f_0(R z)$ is bounded on the unit disk. Furthermore, as in the proof of \cite[Thm.~2.3]{gopal2019solving}, $\tilde{f}_0(z) = \mathcal{O}(\lvert z \rvert^\delta)$ as $z \to 0$ can always be ensured by subtracting a constant from $\tilde{f_0}$, to be absorbed in the polynomial term. Though the bound $|\tilde{f}_0| \leq D |z|^\delta$ in \cref{lem:gopal2019solving_lemma1} may involve different constants from those of $f$ with the same name in the current theorem, and in particular the radius in which the bound hold may be a factor $R$ smaller, the precise relationship between the constants is fixed and independent of $f$, and the bound itself scales linearly with $D$ of the current theorem. Finally, we note that $\|\tilde{f}_0\|_{\infty,D(0,1)}$ can be bounded in terms of $\|f\|_{\infty,\Omega}$.
\end{proof}

\bibliographystyle{siamplain}
\bibliography{references}
\end{document}